\newtheorem{thm}{Theorem}[section]
\newtheorem{cor}[thm]{Corollary}
\newtheorem{lem}[thm]{Lemma}
\theoremstyle{definition}
\newtheorem{exm}[thm]{Example}
\newtheorem*{rem}{Remark}
\DeclareMathOperator{\NN}{\mathbb {N}}
\DeclareMathOperator{\ZZ}{\mathbb {Z}}
\DeclareMathOperator{\lk}{lk}
\DeclareMathOperator{\st}{st}
\DeclareMathOperator{\reg}{reg}
\DeclareMathOperator{\core}{core}
\def\B {\mathcal B}
\def\C {\mathcal C}
\def\S {\mathcal S}
\def\I {\mathcal I}
\def\a {\mathbf a}
\def\b {\mathbf b}
\def\x {\mathbf x}
\def\0 {\mathbf 0}
\def\m {\mathfrak m}
\def\F {\mathcal F}
\def\D {\Delta}
\def\h {\widetilde{H}}
\begin{document}

\title[Regularity of symbolic powers and Arboricity of matroids] {Regularity of symbolic powers and Arboricity of matroids}

\author{Nguy\^en C\^ong Minh}
\address{Department of Mathematics, Hanoi National University of Education, 136 Xuan Thuy, Hanoi,
Vietnam}
\email{minhnc@hnue.edu.vn}
\author{ Tr\^an Nam Trung}
\address{Institute of Mathematics, VAST, 18 Hoang Quoc Viet, Hanoi, Vietnam}
\email{tntrung@math.ac.vn}

\subjclass[2010]{13D45, 05C90, 05E40, 05E45.}
\keywords{Matroid, arboricity, circumference, Stanley-Reisner ideal, regularity.}
\date{}

\dedicatory{}
\commby{}
\begin{abstract} Let $\Delta$ be a simplicial complex of a matroid $M$. In this paper, we explicitly compute the regularity of all the symbolic powers of a Stanley-Reisner ideal $I_\Delta$ in terms of combinatorial data of the matroid $M$. In order to do that, we provide a sharp bound between  the arboricity of $M$ and the circumference of its dual $M^*$. 

\end{abstract}
\maketitle

\section{Introduction}

Let $S = K[x_1,\ldots, x_n]$ be a polynomial over a field $K$ and $I$ a homogeneous ideal in $S$. The Castelnuovo-Mumford regularity (or regularity) of $I$, $\reg (I)$, is a kind of universal bound for important invariants, such as the maximum degree of the syzygies and the maximum non-vanishing degree of the local cohomology modules of $I$.  

It is well-known that (see \cite{CHT, K,TW}) there exist nonnegative integers $d, e$ and $t_0$ such that $\reg(I^t)=dt+e$ for all $t\geqslant t_0$. While $d$ is determined by $I$ (see \cite{K}), a little is known about $e$ and $t_0$. From this remarkable result, Eisenbud and Ulrich \cite{EU} posed the following problems: What is the significance of the number $e$?  What is a reasonable bound $t_0$? These problems continue to attract us (see e.g. \cite{Ber, BHT, C,  EH, Ha, HTT, HHTT}). 

Moving away from ordinary powers, for symbolic powers $I^{(t)}$, it was shown in small dimensional cases that the function $\reg(I^{(t)})$ is bounded by a linear function in $t$ (see \cite{HHT, HT1}). However, Cutkosky \cite{Cu} constructed a non-monomial ideal $I$ such that $\lim_{t\to \infty}\dfrac{\reg(I^{(t)})}{t}$ is not rational, so that the asymptotic behavior of  $\reg(I^{(t)})$ is in general far from being a linear function. For the case $I$ is a monomial ideal, in \cite{HHiT}, they proved that this function is quasi-linear for large $t$. Moreover, if $I$ is a squarefree monomial ideal, then $\lim_{t\to \infty}\dfrac{\reg(I^{(t)})}{t}$ exists due to \cite[Theorem 4.9]{HT1}. Up until now, in that case it is not known whether $\reg(I^{(t)})$ is a linear function in $t$ for $t\gg 0$ or not. If one can add more that $I$ has codimension $2$, then this function will be linear in $t$ for $t\geqslant 2$ by \cite{HT2}. Thus, it is natural to ask the following question:

\medskip

\noindent {\bf Question.} {\it Let $I$ be a (squarefree) monomial ideal in the polynomial ring $S$. Is $\reg(I^{(t)})$ a linear function in $t$ for large $t$?}

\medskip

In this paper, we explicitly compute $\reg(I_{\Delta}^{(t)})$ for every positive integer $t$ where $\Delta$ is a simplicial complex of a matroid $M$ and $I_\D$ the Stanley-Reisner.  In particular, it is a linear function in $t$ for all $t\geqslant 1$, so the question is settled affirmatively in this case. For a matroid $M$, we denote by $r(M)$, $c(M)$ and $\core(M)$ to be the rank, the circumference and the core of $M$, respectively. Then, the main result of the paper is the following.

\medskip
\noindent {\bf Theorem \ref{regsym}.} {\it Let $\D$ be the simplicial complex of a matroid $M$.  Then, $$\reg(I_{\Delta}^{(t)}) = c(M)(t-1)+r(\core(M))+1, \ \text{ for all } t\geqslant 1.$$
}

As an application we can characterize any matroid whose the $t$-th symbolic power of the Stanley-Reisner ideal has linear resolution (see Theorem $\ref{regsym-cor}$). 

An important tool used in  recent researches on the symbolic powers of a squarefree monomial ideal is a Hochster's type formula of computing local cohomology modules in terms of reduced (co)homology groups of some certain simplicial complexes (see \cite{HKTT, HT1, HT2, HMT, MT,  MTT, TT}). In order to study the above question it is  possible that additional hypotheses on $I^{(t)}$ may be required; starting with the case where $I^{(t)}$ is Cohen-Macaulay is probably a good idea. In \cite{TT}, they proved that $I^{(t)}$ is Cohen-Macaulay for some $t\geqslant 3$ (and then for all $t\geqslant 1$) if and only if $I$ is a Stanley-Reisner ideal of a matroid complex.

On the other hand,  matroids provide a link between graph theory, linear algebra, transcendence theory, and semimodular lattices (see \cite{O, W}). Let $M$ be a matroid on the ground set $V$ and let $\B(M)$ be the set of all bases of $M$. The arboricity of $M$, denoted by $a(M)$, is the minimum number of bases needed to cover $V$.  A well-known result of Edmonds \cite{Ed}, extending the result of  Nash-Williams \cite{NW} for graphic matroids, says that
$$a(M)=\max\left\{\left\lceil\dfrac{|A|}{r(A)}\right\rceil \mid A\subseteq V\right\};$$
where $r(A) = \max\{|A\cap B| \mid B \in \B(M)\}$.

In order to get an explicit formula for $a(M)$ is difficult, and so the estimate of this invariant seems to be of independent interest (see e.g. \cite{AMR, DHS, Se}). This is motivated from some computationally intractable problems on a general database.  The second main result of the paper establishes a bound for $a(M)$ in terms of the circumference of its dual matroid $M^*$. It also plays a key role in the proof of Theorem $\ref{regsym}$.

\medskip

\noindent{\bf Theorem \ref{Arbor}.} {\it Let $M$ be a matroid. Then, $a(M) \leqslant c(M^*)$. 
}

\medskip

\indent Now we explain the organization of the paper. In Section 2, we recall some notations and basic facts about  the Stanley-Reisner ideal, matroids, and Castelnuovo-Mumford regularity. Section 3 contains the proof of Theorem \ref{Arbor}. And, in the last section we devote to the proof of Theorem \ref{regsym} and applications.    

\section{Preliminaries}

In this section, we recall some definitions and properties concerning simplicial complex, matroids, degree complexes that will be used later. The interested reader is referred to (\cite{Ei,O,S}) for more details.

\subsection{Simplicial complex} 
Let $\Delta$ be a simplicial complex on $[n]=\{1,\ldots, n\}$ that is a collection of subsets of $[n]$ closed under taking subsets. We put $\dim F = |F|-1$, where $|F|$ is the cardinality of $F$, and
$\dim \Delta = \max \{ \dim F \mid F \in \Delta \}$, which is called the
dimension of $\Delta$.  It is clear that $\Delta$ can be uniquely  determinate by the set of its maximal elements under inclusion, are called by facets, which is denoted $\F(\Delta)$. The complex $\D$ is said that pure if all its facets have the same cardinality.  

Let $K$ be a field and $S=K[x_1,\ldots, x_n]$ a polynomial ring over $K$. The Stanley-Reisner ideal $I_\Delta$ of $\Delta$ (over $K$) is the ideal in $S$ which generated by all square-free monomials $x_{i_1}\ldots x_{i_p}$ such that $\{i_1,\ldots, i_p\}\notin\Delta$. The quotient ring $S/I_\Delta$ is called the Stanley-Reisner ring of $\Delta$. We say that $\D$ is Cohen-Macaulay (over $K$) if so is the Stanley-Reisner ring $S/I_\D$. For a face $F\in\Delta$ and a subset $S\subseteq [n]$ we define the link of $F$ in $\Delta$ to be
$$\lk_{\Delta}F=\{G\in\Delta \mid  F\cup G\in\Delta, F\cap G=\emptyset\}$$
and the restriction of $\Delta$ to $S$ to be
$$\Delta[S] =\{F\in\Delta \mid F\subseteq S\}.$$ 
If $S=[n]\setminus\{u\}$ for some $u\in [n]$, then we will write $\D_{-u}$ instead of $\D[S]$ for simplicity.

\subsection{Matroids}

A matroid $M$ on the ground set $V$ is a collection $\I$ of subsets of $V$, which is called independent sets, satisfying the following conditions:
\begin{enumerate} 
\item[(i)] $\emptyset\in\I$,
\item[(ii)] If $I\in\I$ and $J\subseteq I$, then $J\in\I$,
\item[(iii)] If $I, J\in\I$ and $|J|<|I|$, then there exists an element $x\in I\setminus J$ such that $J\cup\{x\}\in \I$.
\end{enumerate}

Maximal independent sets of $M$ are called bases. They have the same cardinality which is called the rank of $M$, and denoted by $r(M)$. Let $\B(M)$ be  the set of all bases of $M$. A dependent set is a subset of $V$ which is not in $\I$. Minimal dependent sets are called circuits of $M$. Denote by $\C(M)$ the set of all circuits of $M$. It is clear that $\C(M)$ determines $M$: $\I$ consists of subsets of $V$ that do not contain any member of $\C(M)$. The circumference of a matroid $M$ is defined by 
$$c(M) =\max\{|C| \mid C\in \C(M)\}.$$
The matroid $M$ is a star with a center $x\in V$ if $x\in B$ for any $B\in\B(M)$. 

\begin{exm} For a simple graph $G$ we define the graphic matroid of $G$, denoted by $M(G)$, to be a matroid whose independent sets are the forests in $G$. Then, the bases of the graphic matroid $M(G)$ are the spanning forests of $G$, and the circuits of $M(G)$ are the simple cycles of $G$; where a simple cycle means a cycle without chords. 

The cographic matroid of $G$ is just the dual of $M(G)$ and denoted by $M^*(G)$. Recall that a cut-set is a collection of edges $S$ of $G$, such that when the edges in $S$ are deleted from $G$, the number of connected components of $G$ increases by one; and a bond is a cut-set of $G$ that does not have any other cut-set as a proper subset. Accordingly,  circuits in $M^*(G)$ are the bonds of $G$. 
\end{exm}

Let $A$ a subset of $V$. Let $\I|A=\{I\subseteq A \mid I\in\I\}$. Then, $\I|A$ is a also matroid over $A$, denoted by $M|A$. If $U=V\setminus\{u\}$, for $u\in V$, we will write $M_{-u}=M|U$ for short . We define the rank $r(A)$ of $A$ to be the size of a basis $B$ of $M|A$ and a such set is called a basis of $A$. One easily check that $$\C(M|A)=\{C\subseteq A \mid  C\in\C(M)\}.$$ 
Let $\B^*(M)=\{V\setminus B\mid B\in\B(M)\}$. It is known that the set $\B^*(M)$ forms the set of bases of a matroid on $V$ which is called the dual of $M$, is denoted by $M^*$. And, it is clear $(M^*)^*=M$. 

\begin{exm} The uniform matroid $U_{k,n}$ is defined over the ground set $[n]$. A subset of $[n]$ is independent if and only if it contains at most $k$ elements. Thus, a subset is a basis if it has exactly $k$ elements, and it is a circuit if it has exactly $k+1$ elements. Its dual $(U_{k,n})^*$  is also another uniform matroid $U_{n-k,n}$.
\end{exm}

It is apparent from the definition that the collection of independent sets of a matroid $M$ forms a simplicial complex, which is called  matroid complex (or independence complex) of $M$. This one is a pure simplicial complex of dimension $r(M)-1$. 

\begin{lem} \label{star} Let $M$ is a matroid such that it is not a star. Let $\D$ be the matroid complex of $M$  and $F\in\D$. If  $\lk_\D(F)\ne\emptyset$, then $\lk_\D(F)$ is a matroid complex that is not a cone.
\end{lem}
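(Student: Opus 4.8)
The plan is to identify $\lk_\D(F)$ with the matroid complex of a contraction, and then to translate both \emph{star} and \emph{cone} into the single notion of a coloop. Recall that for an independent set $F \in \I$ the contraction $M/F$ is the matroid on the ground set $V \setminus F$ whose independent sets are $\{G \subseteq V \setminus F \mid F \cup G \in \I\}$. Since $F \cup G \in \I$ already forces $G \in \I$, and $F \cap G = \emptyset$ just says $G \subseteq V\setminus F$, one has
$$\lk_\D(F) = \{G \in \D \mid F \cup G \in \D,\ F \cap G = \emptyset\} = \{G \subseteq V \setminus F \mid F \cup G \in \I\},$$
which is exactly the matroid complex of $M/F$. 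As the contraction of a matroid is again a matroid, this settles the first assertion, that $\lk_\D(F)$ is a matroid complex.

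For the second assertion I would first put \emph{star} and \emph{cone} on a common footing. A matroid complex is a cone with apex $x$ precisely when $x$ lies in every facet; because the facets of a matroid complex are the bases, this is equivalent to $x$ belonging to every basis, i.e. to $x$ being a \emph{coloop}. Hence a matroid $N$ is a star if and only if $N$ has a coloop, and $\lk_\D(F) = \D(M/F)$ is a cone if and only if $M/F$ has a coloop. It therefore suffices to show that $M/F$ has no coloop, given that $M$ has none.

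The crux is a rank comparison between $M$ and $M/F$. For $A \subseteq V \setminus F$ the rank function of the contraction satisfies $r_{M/F}(A) = r(A \cup F) - r(F) = r(A \cup F) - |F|$, the last equality because $F$ is independent. Fixing $x \in V \setminus F$ and taking $A = V \setminus F$ and $A = (V \setminus F) \setminus \{x\}$, the shift by $|F|$ cancels and I obtain
$$r_{M/F}(V \setminus F) - r_{M/F}\big((V \setminus F) \setminus \{x\}\big) = r(V) - r(V \setminus \{x\}).$$
The left-hand side equals $1$ exactly when $x$ is a coloop of $M/F$, and the right-hand side equals $1$ exactly when $x$ is a coloop of $M$; so the two conditions coincide. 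Since $M$ is not a star it has no coloop, whence no $x \in V \setminus F$ is a coloop of $M/F$, and therefore $\lk_\D(F)$ is not a cone.

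I expect the only genuine obstacle to be bookkeeping: justifying the stated independent sets and rank function of $M/F$ (both of which rely on $F$ being independent) together with the coloop reformulations of \emph{star} and \emph{cone}. The argument via coloops is uniform and in particular covers the extreme case in which $V \setminus F$ consists only of loops; the hypothesis $\lk_\D(F) \ne \emptyset$ merely ensures that the conclusion is meaningful.
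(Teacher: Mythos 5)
Your proof is correct, but it takes a genuinely different route from the paper's. The paper first reduces to the case $F=\{x\}$ a single vertex and then argues by contradiction directly from the basis exchange axiom: if $\lk_\D(x)$ were a cone with apex $y$, one picks a basis $B$ of $M$ avoiding $y$ (possible since $M$ is not a star), takes a facet of the link, deletes $y$ and exchanges an element of $B$ back in to manufacture a facet of the link missing $y$. You instead identify $\lk_\D(F)$ with the complex of the contraction $M/F$, observe that both \emph{star} and \emph{cone} amount to the existence of a coloop, and use the contraction rank formula $r_{M/F}(A)=r(A\cup F)-|F|$ to show that the coloops of $M/F$ are exactly the coloops of $M$ outside $F$ --- so none exist. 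Your version is uniform in $F$ (no reduction to singletons), isolates a clean general fact (contracting an independent set creates no coloops), and handles the degenerate case $\lk_\D(F)=\{\emptyset\}$ automatically; the cost is that it leans on standard facts about the contraction's independent sets and rank function, which you correctly flag as the bookkeeping to be supplied, whereas the paper's argument is self-contained from the exchange axiom and its own definition of a star. All the ingredients you invoke (the rank formula, the equivalence of ``coloop'' with $r(V)-r(V\setminus\{x\})=1$, and ``cone'' meaning the apex lies in every facet of a pure complex) are standard and correctly applied, so there is no gap.
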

\begin{proof} It suffices to prove the lemma in the case $F=\{x\}$ for $x\in V$. Note that $\lk_\D(x)$ is also a matroid by definition. Assume on the contrary that $\lk_\D(x)\ne\emptyset$ is a cone for some $x\in V$. Let $y$ be a center of this cone. Obviously, $y\ne x$. Since $M$ is not a star, there exists $B\in \B(M)$ such that $y\notin B$ (i.e. $x\notin B$). Put $F\in\F(\lk_\D(x))$, then $F\cup \{x\}\in\B(M), x\notin F$. Therefore, $F'=(F\cup \{x\})\setminus\{y\}\in M$ and $|(F\cup \{x\})\setminus\{y\}|<|B|$. By the definition of matroids, there exists $z\in B\setminus F'$ such that $F'\cup\{z\}\in \B(M)$. Thus, $(F'\cup\{z\})\setminus\{x\}\in\F(\lk_\D(x))$ and $y\notin (F'\cup\{z\})\setminus\{x\}$, which is a contradiction, and the lemma follows.
\end{proof}

We will also need the following property of a matroid (see \cite[Theorem 3.4]{S}).

\begin{lem} \label{acyclic} If $\D$ be a matroid complex, then $\D$ is a cone if and only if it is acyclic (i.e., has vanishing reduced homology).
\end{lem}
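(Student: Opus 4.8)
The plan is to translate the topological dichotomy into matroid language and then into a single evaluation of the Tutte polynomial. First I would record the easy direction and the right reformulation of ``cone''. A cone is contractible, so it has vanishing reduced homology; hence if $\D$ is a cone it is acyclic. For the converse it is convenient to note that the matroid complex $\D$ is a cone with apex $v$ precisely when $v$ is a coloop of $M$ (an element lying in every basis): indeed $v$ can be added to every independent set without destroying independence exactly when $v$ lies in no circuit, which is the definition of a coloop. Thus the lemma is equivalent to the statement that $\D$ is acyclic if and only if $M$ has a coloop, and it remains to prove that a coloop-free matroid has non-vanishing reduced homology.

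The key reduction is that I do not need the (known) Cohen--Macaulayness of $\D$ at all: since an acyclic complex has reduced Euler characteristic $0$, it suffices to show that $\widetilde{\chi}(\D)\neq 0$ whenever $M$ is coloop-free, and conversely that $\widetilde{\chi}(\D)=0$ whenever $M$ has a coloop. I would compute $\widetilde{\chi}(\D)$ through the Tutte polynomial $T_M(x,y)=\sum_{A\subseteq V}(x-1)^{r(M)-r(A)}(y-1)^{|A|-r(A)}$. Setting $y=1$ kills every summand except those with $A$ independent, giving $T_M(x,1)=\sum_{A\in\D}(x-1)^{r(M)-|A|}$; evaluating at $x=0$ and comparing with $\widetilde{\chi}(\D)=-\sum_{A\in\D}(-1)^{|A|}$ yields the identity $T_M(0,1)=(-1)^{r(M)-1}\widetilde{\chi}(\D)$. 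Hence $\widetilde{\chi}(\D)=0$ if and only if $T_M(0,1)=0$, and the whole lemma is reduced to the purely matroidal fact that $T_M(0,1)=0$ if and only if $M$ has a coloop.

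To settle this last point I would pass to the dual. By Tutte duality $T_M(x,y)=T_{M^*}(y,x)$, so $T_M(0,1)=T_{M^*}(1,0)$, and since coloops of $M$ are exactly the loops of $M^*$, the claim becomes: $T_N(1,0)\neq 0$ if and only if $N$ is loopless (applied to $N=M^*$). This I would prove by deletion--contraction induction on $|E(N)|$. If $N$ has a loop $e$ then $T_N=y\,T_{N\setminus e}$, so $T_N(1,0)=0$. If $N$ is loopless, pick any element $e$ (necessarily not a loop): when $e$ is a coloop, $T_N(x,y)=x\,T_{N\setminus e}(x,y)$ gives $T_N(1,0)=T_{N\setminus e}(1,0)$, and when $e$ is ordinary, $T_N=T_{N\setminus e}+T_{N/e}$ gives $T_N(1,0)=T_{N\setminus e}(1,0)+T_{N/e}(1,0)$; in either case the deletion $N\setminus e$ is again loopless, so by induction its value is strictly positive and therefore so is $T_N(1,0)$ (the remaining terms being nonnegative because the Tutte polynomial has nonnegative coefficients). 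The base case $E(N)=\emptyset$ gives $T_N(1,0)=1>0$.

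The heart of the argument---and the step I expect to require the most care---is exactly this strict positivity. The naive attempt to induct directly on $\widetilde{\chi}(\D)$ via deletion and contraction of $M$ fails to close, because deleting an element from a coloop-free matroid can create new coloops and so the inductive hypothesis is not preserved; the point of the duality $T_M(0,1)=T_{M^*}(1,0)$ is that it converts ``coloop-free'' into ``loopless'', a property that is manifestly preserved under deletion, which is what makes the induction go through with a guaranteed positive summand. Assembling the pieces, cone $\Rightarrow$ acyclic $\Rightarrow \widetilde{\chi}(\D)=0 \Rightarrow T_M(0,1)=0 \Rightarrow M$ has a coloop $\Rightarrow \D$ is a cone, so all four conditions are equivalent and the lemma follows.
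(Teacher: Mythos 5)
Your argument is correct, and it is genuinely different from what the paper does: the paper gives no proof at all, simply citing Theorem 3.4 of Stanley's book, where the standard argument runs through shellability of the independence complex --- a matroid complex is shellable, hence homotopy equivalent to a wedge of top-dimensional spheres whose number is the top $h$-vector entry $h_d=T_M(0,1)$, and one then checks that $h_d=0$ exactly when $M$ has a coloop. Your route replaces all of that machinery by the single observation that non-vanishing of $\widetilde{\chi}(\D)$ already rules out acyclicity, so only the Euler characteristic needs to be computed; the identity $T_M(0,1)=(-1)^{r(M)-1}\widetilde{\chi}(\D)$, the duality $T_M(0,1)=T_{M^*}(1,0)$, and the deletion--contraction positivity argument are all correct (including the key point that you keep the \emph{deletion} term, since looplessness is preserved by deletion but not by contraction, and the contraction term is nonnegative by nonnegativity of Tutte coefficients). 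Each step checks out, including the equivalence ``cone with apex $v$ $\Leftrightarrow$ $v$ is a coloop'' and the degenerate case $r(M)=0$, where $\widetilde{\chi}=-1$. What the shellability proof buys beyond your statement is strictly more information --- homology concentrated in the top dimension and free of rank $T_M(0,1)$ --- which the paper does not need here; what your proof buys is self-containedness and elementarity, at no real cost for the purposes of this lemma.
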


\subsection{Castelnuovo-Mumford regularity, Symbolic power and degree complexes}

Let $\m = (x_1,\ldots, x_n)$ be the maximal homogeneous ideal of $S$. For a finitely generated graded $S$-module $L$, let
$$a_i(L)=\sup\{j\in\ZZ \mid H_{\m}^i(L)_j \ne 0\}$$ 
where $H^{i}_{\m}(L)$ denotes the $i$-th local cohomology module of $L$ with respect $\m$. Then, the Castelnuovo-Mumford regularity (or regularity for short) of $L$, denoted by $\reg(L)$, is defined by
$$\reg(L) = \max\{a_i(L) +i\mid i = 0,\ldots, \dim L\}.$$

The regularity of $L$ also defines via the graded minimal free resolution. Assume that the graded minimal free resolution of $L$ is
$$0\longleftarrow L\longleftarrow F_0\longleftarrow F_1\longleftarrow\cdots\longleftarrow F_p\longleftarrow 0.$$
Let $t_i(L)$ be the maximal degree of graded generators of $F_i$. Then,
$$\reg(L) = \max\{t_i(L) - i\mid i = 0, \ldots, p\}.$$

Let $J$ be a non-zero and proper homogeneous ideal of $S$. From the graded minimal free resolution of $S/J$ we obtain $\reg(J)=\reg(S/J)+1$. We say that $J$ has a linear resolution if all entries in the matrices representing the differentials in a graded minimal free resolution of $J$ are linear forms. 

Let $\{P_1,\ldots,P_r\}$ be the set of the minimal prime ideals of $J$. Given an positive integer $t$, the $t$-th symbolic power of $J$ is defined by
$$J^{(t)}=\bigcap_{i=1}^r J^tS_{P_i}\cap S.$$
In particular, if $J=I_\D$ is the Stanley-Reisner ideal of a simplicial complex $\D$, putting $P_F=(x_i\mid i\notin F)S$ for each facet $F\in\F(\D)$, then $I_\D=\bigcap_{F\in\F(\D)}P_F$, and then
$$I_\D^{(t)}=\bigcap_{F\in\F(\D)}P_F^t.$$

Let $\D$ be the matroid complex of a matroid $M$ on the ground set $[n]$. Let $I$ be the Stanley-Reisner ideal of $\D$ in $S$. One can see that $\B(M)=\F(\D)$ and $$I=(x^C \mid C\in\C(M))=\bigcap_{B\in\B(M)}P_B,$$
where $x^C=\prod_{i\in C}x_i$, and hence  $I^{(t)}=\bigcap_{B\in\B(M)}P_B^t$.  

Let $$\core([n])=\{i\in[n]\mid \st_{\D}(i)\ne \D\},$$ 
where $\st_\D(i)=\{F\in\D\mid F\cup\{i\}\in\D\}$, and $\core(\D)=\D[\core([n])]$ (also write w.r.t. $\core(M)=M|\core([n])$).
It is clear that $\D[[n]\setminus\core([n])]$ is a simplex and $\{x_i\mid i\in[n]\setminus\core([n])\}$ forms a linear regular sequence of $S/I^{(t)}$. Therefore,  $$\reg(I^{(t)})=\reg(I_{\core(\D)}^{(t)}).$$

For simplicity of exposition, throughout the rest of this paper, we can assume $M=\core(M)$, i.e. $M$ is not a star and in this case, $\D=\core(\D)$ and $n\geqslant 2$. 

\medskip

Let $I$ be a monomial ideal in $S$. Takayama in \cite{Ta} found a combinatorial formula for $\dim_KH_\m^i(S/I)_\a$ for all $\a\in\ZZ^n$ in terms of certain simplicial complexes. For every $\a = (a_1,\ldots, a_n) \in \ZZ^n$ we set $G_\a = \{i\mid \ a_i < 0\}$ and write $\x^{\a} = \Pi_{j=1}^n x_j^{a_j}$. Thus, $G_\a =\emptyset$ whenever $\a \in \NN^n$. The degree complex $\D_\a(I)$ is the simplicial complex whose faces are sets of form 
 $F \setminus G_\a$, where $G_\a\subseteq F\subseteq [n]$, so that for every minimal generator $x^\b$ of $I$ there exists an index $i \not\in F$ with $a_i < b_i$. To present $\D_\a(I)$ in a more compact way, for every subset $F$ of $[n]$ let $S_F :=R[x_i^{-1} \mid i\in F \cup G_\a]$. Then, by \cite[Lemma 1.2]{MT} we have
$$\D_\a(I) = \{F\subseteq [n]\setminus G_\a\mid \x^\a \notin IS_F\}.$$

Let $\D(I)$ denote the simplicial complex that corresponds to the square-free monomial ideal $\sqrt{I}$. Note that $\Delta(I_{\Delta}^{(t)}) = \D$. We have:

\begin{lem}[\cite{Ta}, Theorem 1] \label{T}
$$\dim_KH_\m^i(S/I)_\a = 
\begin{cases}
\dim_K\widetilde H_{i-|G_\a|-1}(\D_\a(I);K) & \text{\rm if }\ G_\a \in \D(I)\ ,\\
0 & \text{\rm otherwise. }
\end{cases} $$
\end{lem}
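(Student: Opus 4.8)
The statement is the $\ZZ^n$-graded Hochster-type formula of Takayama, and the natural route is to compute $H^i_\m(S/I)$ one $\ZZ^n$-degree at a time with the extended \v{C}ech complex on $x_1,\dots,x_n$. The plan is to write down $\check C^\bullet$, whose $p$-th term is $\bigoplus_{|F|=p}(S/I)_{x_F}$ with $(S/I)_{x_F}=(S/I)[x_i^{-1}\mid i\in F]$ and the usual signed localization differentials. Since $I$ is a monomial ideal this complex is $\ZZ^n$-graded and $H^i_\m(S/I)=H^i(\check C^\bullet)$, so fixing $\a\in\ZZ^n$ it suffices to understand the degree-$\a$ strand $\check C^\bullet_\a$.

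The key local step is to identify $((S/I)_{x_F})_\a$. I would show it is one-dimensional, spanned by $\x^\a$, exactly when $a_i\ge 0$ for all $i\notin F$ (equivalently $G_\a\subseteq F$) and $\x^\a\notin IS_F$, and is zero otherwise, where $S_F=R[x_i^{-1}\mid i\in F\cup G_\a]$ as in the definition of the degree complex. Reindexing the surviving terms by $F':=F\setminus G_\a$, a summand of $\check C^p_\a$ then survives precisely when $F'\in\D_\a(I)$ and $|F'|=p-|G_\a|$. Thus, after the shift by $|G_\a|$, the strand $\check C^\bullet_\a$ is isomorphic to the augmented (reduced) simplicial cochain complex of $\D_\a(I)$, once one checks that the signed \v{C}ech differential matches the simplicial coboundary. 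Taking cohomology gives $H^i_\m(S/I)_\a\cong\h^{\,i-|G_\a|-1}(\D_\a(I);K)$, and since we work over a field $K$ the dimensions of reduced cohomology and reduced homology coincide, which yields the displayed formula in the case $G_\a\in\D(I)$.

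It then remains to dispose of the case $G_\a\notin\D(I)$. Here I would argue that some minimal monomial generator of $I$ has support contained in $G_\a$, so that after inverting the variables $x_i$, $i\in G_\a$, this generator becomes a unit; hence $\x^\a\in IS_F$ for every $F\supseteq G_\a$, the degree complex $\D_\a(I)$ is void (even the empty face is absent), the whole strand $\check C^\bullet_\a$ vanishes, and $H^i_\m(S/I)_\a=0$, as claimed.

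The main obstacle is the bookkeeping in the middle step: establishing the one-dimensionality of $((S/I)_{x_F})_\a$ precisely, and above all matching the \v{C}ech differential with the reduced simplicial coboundary, keeping track of signs and of the homological shift by $|G_\a|$. A closely related subtlety is the reduced-versus-unreduced distinction, which is concentrated at the empty face $F'=\emptyset$: one must verify that the term with $F=G_\a$ sits exactly at the augmentation level $\widetilde C^{-1}$ of $\D_\a(I)$, so that the formula produces reduced rather than ordinary homology.
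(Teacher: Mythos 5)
The paper offers no proof of this lemma---it is quoted directly from Takayama's Theorem 1---so the only comparison available is with the original argument, which is precisely the \v{C}ech-complex strand computation you outline. Your sketch is correct: the identification of the degree-$\a$ strand with the shifted reduced cochain complex of $\D_\a(I)$, the field-coefficient passage from cohomology to homology, and the vanishing when $G_\a \notin \D(I)$ are the standard steps, and your direct computation even dispenses with the auxiliary hypothesis on the exponents ($a_j \leqslant \rho_j - 1$) that appears in Takayama's original formulation but is known to be removable.
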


The next lemma is very useful to compute $\D_{\a}(I_{\Delta}^{(t)})$ in this paper.
 
\begin{lem}[\cite{HT2}, Lemma 1.3] \label{degreecomplex} Let $\a\in \ZZ^n$ such that $G_\a \in \Delta$. Then,
$$\F(\D_\a(I_\D^{(t)}))=\{F\in\F(\lk_\D(G_\a))\mid \sum_{i\notin F\cup G_\a}a_i\leqslant t-1\}.$$
\end{lem}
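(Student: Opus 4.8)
The plan is to work throughout with the compact description $\D_\a(I_\D^{(t)}) = \{F \subseteq [n]\setminus G_\a \mid \x^\a \notin I_\D^{(t)} S_F\}$ together with the primary decomposition $I_\D^{(t)} = \bigcap_{B\in\B(M)} P_B^t$, where $P_B = (x_i\mid i\notin B)$. Since localization commutes with finite intersections, $\x^\a \in I_\D^{(t)} S_F$ if and only if $\x^\a \in P_B^t S_F$ for every $B\in\B(M)$; equivalently, $F \in \D_\a(I_\D^{(t)})$ if and only if there exists some $B \in \B(M)$ with $\x^\a \notin P_B^t S_F$. This reduces the whole question to a single, purely local statement: for a fixed basis $B$, decide exactly when $\x^\a \notin P_B^t S_F$.

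I would then carry out this local computation, which is the technical core. A generator of $P_B^t$ has the form $\prod_{i\notin B} x_i^{c_i}$ with $c_i\geqslant 0$ and $\sum_{i\notin B} c_i = t$, and $\x^\a$ lies in $P_B^t S_F$ precisely when one such generator divides $\x^\a$ after inverting the variables indexed by $F\cup G_\a$, i.e. when the quotient has nonnegative exponent on every index outside $F\cup G_\a$. I split into two cases. If $(F\cup G_\a)\setminus B \neq \emptyset$, then some $x_i$ with $i\notin B$ is a unit in $S_F$, so $P_B^t S_F = S_F$ and $\x^\a \in P_B^t S_F$ automatically. If instead $F\cup G_\a \subseteq B$, then every index $i\notin B$ lies outside $F\cup G_\a$ (and in particular $a_i\geqslant 0$, since $G_\a\subseteq B$), and distributing the degree $t$ subject to $0\leqslant c_i\leqslant a_i$ is feasible exactly when $\sum_{i\notin B} a_i \geqslant t$. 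Combining the cases, $\x^\a \notin P_B^t S_F$ holds if and only if $F\cup G_\a \subseteq B$ and $\sum_{i\notin B} a_i \leqslant t-1$.

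With this the combinatorics is short. Because $\D$ is a matroid complex and $G_\a\in\D$, the existence of a basis $B$ with $F\cup G_\a\subseteq B$ is equivalent to $F\cup G_\a\in\D$, i.e. to $F\in\lk_\D(G_\a)$ (here $F\cap G_\a=\emptyset$ is automatic as $F\subseteq[n]\setminus G_\a$). Hence $F\in\D_\a(I_\D^{(t)})$ iff $F\in\lk_\D(G_\a)$ and some basis $B\supseteq F\cup G_\a$ satisfies $\sum_{i\notin B} a_i\leqslant t-1$; in particular $\D_\a(I_\D^{(t)})$ is a subcomplex of $\lk_\D(G_\a)$. To pin down the facets I argue both directions. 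If $F\in\F(\lk_\D(G_\a))$, then by purity of the matroid complex $F\cup G_\a$ is itself a basis $B$, so $\sum_{i\notin F\cup G_\a} a_i = \sum_{i\notin B} a_i$ and the displayed inequality says exactly that $F\in\D_\a(I_\D^{(t)})$; being maximal already in the ambient complex $\lk_\D(G_\a)$, it is a facet of $\D_\a(I_\D^{(t)})$. Conversely, if $F$ is a facet of $\D_\a(I_\D^{(t)})$ certified by a basis $B\supseteq F\cup G_\a$, I claim $F\cup G_\a = B$: otherwise pick $j\in B\setminus(F\cup G_\a)$, and then $F\cup\{j\}$ still lies in $\lk_\D(G_\a)$ and is certified by the same $B$, contradicting maximality of $F$. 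Thus $F\in\F(\lk_\D(G_\a))$ and $\sum_{i\notin F\cup G_\a} a_i\leqslant t-1$, which finishes the proof.

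The step I expect to be the genuine obstacle is the local membership computation of the second paragraph: one must handle the inverted variables carefully, tracking that the unconstrained indices $i\in(F\cup G_\a)\setminus B$ can absorb arbitrary degree (forcing $\x^\a\in P_B^t S_F$) while the constrained indices $i\notin F\cup G_\a$ cap the achievable total degree at $\sum_{i\notin B}a_i$. Everything else — the reduction to a single basis and the passage from faces to facets via the purity and exchange properties of matroid complexes — is routine once this case analysis is correctly in place.
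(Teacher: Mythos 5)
Your argument is correct. Note, however, that the paper itself gives no proof of this lemma --- it is quoted verbatim from \cite[Lemma 1.3]{HT2} --- so there is nothing internal to compare against; what you have written is a valid self-contained derivation along the standard lines of that source: localize the decomposition $I_\D^{(t)}=\bigcap_{B}P_B^t$ at $S_F$, observe that $\x^\a\notin P_B^tS_F$ exactly when $F\cup G_\a\subseteq B$ and $\sum_{i\notin B}a_i\leqslant t-1$, and then use purity and the exchange property to pass from faces to facets. All the delicate points (monomial membership after inverting variables, the feasibility condition $\sum_{i\notin B}a_i\geqslant t$ for distributing degree $t$, and the maximality argument showing a certifying basis of a facet must equal $F\cup G_\a$) are handled correctly.
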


\section{The arboricity and the circumference of a matroid}

Let $M$ be a matroid on the ground set $V=[n]$. The arboricity of $M$, denoted by $a(M)$, is the minimum number of bases needed to cover all elements of the matroid $M$. In this section, we establish a sharp bound for $a(M)$ in terms of the circumstance of its dual matroid $M^*$. 

Define
$$\gamma(M) = \min\{|\S| \mid  \emptyset\ne \S\subseteq \B(M) \text{ and } \bigcap_{B\in \S} B = \emptyset\}.$$ 
For any $\emptyset\ne\S\subseteq \B(M)$, one can see that
$$\bigcap_{B\in \S} B = \emptyset \Longleftrightarrow \bigcup_{B\in \S} (V\setminus B) = V,$$
so that $\gamma(M) = a(M^*)$.

For the proof of the main theorem, some more preparations are needed. 

\begin{lem} \label{H0} Let $M$ be a matroid. Then, $r(M) = |V| - 1$ if and only if $\B(M) = \{V\setminus x\mid x\in V\}$. In particular,  $\gamma(M) = c(M) = |V|$.
\end{lem}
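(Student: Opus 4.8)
The plan is to establish the two implications of the equivalence separately and then read off the values of $\gamma(M)$ and $c(M)$ directly from the explicit description of $\B(M)$. The direction $(\Leftarrow)$ is immediate: if every basis has the form $V\setminus x$, then all bases have cardinality $|V|-1$, whence $r(M)=|V|-1$ by definition of the rank.

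For $(\Rightarrow)$, I would argue as follows. Assume $r(M)=|V|-1$. Since all bases have cardinality $|V|-1$, each basis is automatically of the form $V\setminus x$ for some $x\in V$, which gives the inclusion $\B(M)\subseteq\{V\setminus x\mid x\in V\}$. To obtain the reverse inclusion I would invoke the standing hypothesis that $M=\core(M)$, i.e. that $M$ is not a star, so no element of $V$ lies in every basis. Fixing $x\in V$, there is then a basis $B$ with $x\notin B$; since $B\subseteq V\setminus x$ and $|B|=|V|-1=|V\setminus x|$, we are forced to have $B=V\setminus x$. Thus $V\setminus x\in\B(M)$ for every $x\in V$, and equality follows. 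I expect this to be the one genuinely essential step: without the no-star assumption a single element could be common to all bases (for instance a coloop), and then the conclusion would fail, so the hypothesis $M=\core(M)$ must be used here and nowhere else.

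For the final assertion I would work directly with the now-established description $\B(M)=\{V\setminus x\mid x\in V\}$. To compute $\gamma(M)$, observe that for any $T\subseteq V$ one has $\bigcap_{x\in T}(V\setminus x)=V\setminus T$, which is empty exactly when $T=V$; hence the smallest nonempty subfamily of bases with empty intersection is the whole family of $|V|$ bases, giving $\gamma(M)=|V|$. To compute $c(M)$, note that $V$ is dependent because $|V|>r(M)$, while every proper subset of $V$ is contained in some $V\setminus x\in\B(M)$ and is therefore independent; consequently $V$ is the unique circuit of $M$, so $c(M)=|V|$. This gives $\gamma(M)=c(M)=|V|$ and completes the proof.
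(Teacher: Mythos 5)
Your proof is correct and follows essentially the same route as the paper: the paper also notes that $r(M)=|V|-1$ forces $\B(M)\subseteq\{V\setminus x\mid x\in V\}$ and then uses the standing hypothesis that $M$ is not a star to rule out $V\setminus x\notin\B(M)$ (phrased contrapositively, whereas you argue directly by picking a basis avoiding $x$). You additionally spell out the computation of $\gamma(M)=c(M)=|V|$, which the paper's proof leaves implicit; your argument there (that $V$ is the unique circuit and that $\bigcap_{x\in T}(V\setminus x)=V\setminus T$) is correct.
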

\begin{proof} It is clear that $\B(M) \subseteq \{V\setminus \{x\} \mid x\in V\}$. Assume $V\setminus x \notin \B(M)$. It implies that $x\in V\setminus\{y\}$ for any $y\ne x$. Then, $M$ is a star with a center $x$, a contradiction. Therefore, $\B(M) = \{V\setminus x\mid x\in V\}$, as required.
\end{proof}

\begin{lem} \label{CB} Let $M$ be a matroid on the ground set $V=[n]$. Then, $$\gamma(M) \leqslant c(M).$$
\end{lem}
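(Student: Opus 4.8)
The plan is to deduce the inequality from Edmonds' formula applied to the dual matroid, together with the identity $\gamma(M)=a(M^*)$ established just above. Since $M=\core(M)$ is coloopless, $M^*$ has no loops, so Edmonds' theorem (stated in the Introduction) applied to $M^*$ gives
$$\gamma(M)=a(M^*)=\max\left\{\left\lceil \frac{|A|}{r^*(A)}\right\rceil \ \Big|\ \emptyset\ne A\subseteq V\right\},$$
where $r^*$ is the rank function of $M^*$. Thus it suffices to prove $|A|\le c(M)\,r^*(A)$ for every nonempty $A\subseteq V$. Using the standard dual rank formula $r^*(A)=|A|-r(M)+r(V\setminus A)$, I set $k:=r^*(A)$ and $q:=r(M)-r(V\setminus A)$, so that $|A|=q+k$, and the desired inequality $|A|\le c(M)k$ becomes simply $q\le (c(M)-1)k$.

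Next I would choose a basis adapted to $A$. Pick a basis $I$ of the restriction $M|(V\setminus A)$ and extend it to a basis $B$ of $M$; then $B\cap(V\setminus A)=I$, hence $|B\cap A|=q$ and $|A\setminus B|=k$. The strategy is to charge each of the $q$ elements of $B\cap A$ to the fundamental circuits of the $k$ elements of $A\setminus B$: for $e\in A\setminus B$ write $C(e,B)$ for the unique circuit contained in $B\cup\{e\}$, and recall $|C(e,B)|\le c(M)$, so $C(e,B)$ meets $B$ in at most $c(M)-1$ elements. If I can show
$$B\cap A\subseteq \bigcup_{e\in A\setminus B} C(e,B),$$
then $q=|B\cap A|\le \sum_{e\in A\setminus B}\big(|C(e,B)|-1\big)\le (c(M)-1)\,|A\setminus B|=(c(M)-1)k$, which is exactly what is needed.

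The main obstacle is precisely this covering claim, and it is where coloop-freeness ($M=\core(M)$) enters. Fix $f\in B\cap A$. For $e\notin B$ one has $f\in C(e,B)$ if and only if $B\setminus\{f\}\cup\{e\}$ is a basis, i.e. $e\notin \operatorname{cl}(B\setminus\{f\})$; so a failure of the claim at $f$ would force $A\setminus B\subseteq \operatorname{cl}(B\setminus\{f\})$. Since $B\setminus A\subseteq B\setminus\{f\}$ spans $V\setminus A$, the hyperplane $\operatorname{cl}(B\setminus\{f\})$ would then contain $(V\setminus A)\cup(A\setminus B)\cup(B\setminus\{f\})=V\setminus\{f\}$; but $f$ is not a coloop, so $V\setminus\{f\}$ already has rank $r(M)$, contradicting that $\operatorname{cl}(B\setminus\{f\})$ has rank $r(M)-1$. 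The delicate points to verify carefully are this equivalence and the bookkeeping showing the three sets cover $V\setminus\{f\}$; everything else is routine, and the lemma follows.

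An alternative, more self-contained route would bypass Edmonds' formula and argue by induction on $|V|$: if the ground set is a single circuit then $M=U_{|V|-1,|V|}$ and $\gamma(M)=c(M)=|V|$ by Lemma \ref{H0}, while otherwise one deletes a vertex $y$ outside a maximum circuit and applies the induction hypothesis to $M_{-y}$, whose bases avoiding $y$ are bases of $M$. The catch is that $M_{-y}$ may acquire a coloop, making $\gamma(M_{-y})$ infinite; one would then have to choose $y$ to avoid this or treat series classes separately, and it is to sidestep exactly this complication that I would prefer the dual/Edmonds argument above.
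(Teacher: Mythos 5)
Your argument is correct, and it takes a genuinely different route from the paper's. The paper proves the lemma by induction on $|V|$: for each $x$ it forms the set $V_x$ consisting of $x$ together with the elements lying in every basis that avoids $x$, shows via two claims that $V_x$ is contained in every circuit through $x$ and that the $V_y$ for $y\in V_x$ coincide, passes to the smaller matroid $M_x$, and explicitly assembles at most $c(M)$ bases of $M$ with empty intersection from bases of $M_x$. You instead invoke Edmonds' theorem for $M^*$ (legitimate and non-circular here: the paper cites Edmonds as an external result and only uses the present lemma afterwards, in Theorem \ref{Arbor} and Corollary \ref{MB}), reducing the claim to the pointwise inequality $|A|\leqslant c(M)\,r^*(A)$ for all nonempty $A$. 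Your verification of that inequality is sound: the bookkeeping $|A|=q+k$ with $q=|B\cap A|$ and $k=|A\setminus B|=r^*(A)\geqslant 1$ (positivity holding because $M=\core(M)$ has no coloops, so every singleton has dual rank $1$) is right, the equivalence ``$f\in C(e,B)$ iff $e\notin\operatorname{cl}(B\setminus\{f\})$'' is the standard basis-exchange fact, and the covering claim $B\cap A\subseteq\bigcup_{e\in A\setminus B}C(e,B)$ does follow, since a failure at $f$ forces $V\setminus\{f\}\subseteq\operatorname{cl}(B\setminus\{f\})$ and hence makes $f$ a coloop, contradicting the standing assumption that $M$ is not a star. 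Comparing the two: your proof is shorter and actually establishes the stronger local inequality $|A|\leqslant c(M)r^*(A)$ for every $A$, of which Corollary \ref{MB} is the special case $A=V$ (so that corollary would no longer need a separate appeal to Edmonds); what it gives up is self-containedness --- Edmonds' theorem is used as a black box --- and the explicit construction of a family of at most $c(M)$ bases with empty intersection, which the paper's induction produces directly. Your closing remark about the pitfalls of the alternative deletion-based induction is accurate and is precisely the difficulty the paper's $V_x$-machinery is designed to handle.
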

\begin{proof} We will prove the assertion by induction on $n$.

If $n= 2$ then $r(M) = 1$, since $M$ is not a star. In this case, $\gamma(M) = c(M)$ from Lemma $\ref{H0}$.

If $n > 2$. We have $r(M)=r(M_{-x})$ for any $x\in V$ and $\B(M_{-x})\subseteq\B(M)$, by $M$ is not a star. For each vertex $x\in V$, let $W_x$ be the maximal subset of $V\setminus \{x\}$ such that $W_x\subseteq B$ for all $B\in \B(M_{-x})$ and $M_x=M|(V\setminus (W_x\cup\{x\}))$. Then, $M_x$ is a matroid and not a star. It is obvious that $B\cup W_x\in\B(M_{-x})$ for any $B\in \B(M_x)$. Let $V_x := W_x\cup \{x\}$.  

If $r(M_x) = 0$, i.e. $M_x = \{\emptyset\}$, for some $x\in V$. Therefore, $W_x$ must belong to $\B(M)$ and $\B(M_{-x})=\{W_x\}$. If there exists $y\in (V\setminus \{x\})\setminus W_x$, then there exists $B\in\B(M)$ such that $y\in B, B\setminus\{y\}\subseteq W_x$. Hence, $W_x\ne B\in \B(M_{-x})$, a contradiction. Thus, $W_x=V\setminus \{x\}$ i.e. $r(M) = |V|-1$. By Lemma $\ref{H0}$, $\gamma(M) = c(M)$. 

Now, we may assume that $r(M_x) \geqslant 1$ for any $x\in V$. The rest of our proof will be shown through the following claims.

\smallskip

\textbf{Claim 1:} For any $x, y\in V$,
$$
V_x = V_y \text{ if and only if } y\in V_x.
$$

\smallskip

In order to prove this claim it suffices to show that if $M_{-x}$ is a star with a center $y$ i. e. $y\in W_x$, then $V_x=V_y$.

Since $y\in W_x$, $y\in B$ for any $B\in\B(M_{-x})$. If $B\in\B(M)\setminus\B(M_{-x})$ then $x\in B$. Therefore, for any $B\in\B(M)$, either $x\in B$ or $y\in B$. Hence, we can list 
$$\B(M) =T\cup \{B_{u+1},\ldots, B_{s}, B_{s+1},\ldots, B_m\},$$
where
\begin{itemize}
\item $x,y\in B$ for $B\in T$ and $|T|=u$;
\item $x\in B_i$ and $y\notin B_i$ for $i=u+1,\ldots,s$; and
\item $x\notin B_i$ and $y\in B_i$ for $i=s+1,\ldots,m$.
\end{itemize}

\noindent and $0\leqslant u < s < m$ by $M$ is not a star.  Let $F_i := B_i\setminus \{x\}$ for $i=u+1,\ldots,s$ and $G_i := B_i\setminus \{y\}$ for $i=s+1,\ldots, m$.

Since $M_{-x}$ is a matroid and the above list, we have
$$\B(M_{-x}) = \{B_{s+1}, \ldots, B_m\}.$$
This yields
$$\{F_{u+1}, \ldots, F_s\} \subseteq \{G_{s+1},\ldots, G_m\}.$$
Similarly, one can see that 
$$\B(M_{-y}) = \{B_{u+1}, \ldots, B_{s}\}$$
and 
$$\{G_{s+1},\ldots, G_m\}\subseteq \{F_{u+1}, \ldots, F_s\}.$$
Consequently, $\{G_{s+1},\ldots, G_m\}= \{F_{u+1}, \ldots, F_s\}$. From this, $M_{-y}$ is a star with a center $x$. In particular, $x\in W_y$.

Now, take $t\in W_x\setminus\{y\}$, then $t\in G_i$ for all $i=s+1,\ldots,m$. This gives $t\in F_i$ for all $i=u+1,\ldots,s$. It implies $t\in W_y$. Thus, $V_x\subseteq V_y$. By an argument analogous, we get $V_y\subseteq V_x$. This statement is as required of the claim. 

\smallskip

\textbf{Claim 2:} For any $x\in V$ and $C\in\C(M)$ such that $x\in C$. Then,
$$
V_x \subseteq C. 
$$

\smallskip

Assume the contrary, that $V_x \not \subseteq C$. Put $y\in V_x \setminus C$. By definition, $C$ is also a circuit of $M_{-y}$ and $C\setminus \{x\}$ is also an independent set of $M_{-y}$. Using Claim 1, we have $M_{-y}$ is a star with a center $x$.  Therefore, $C = \{x\} \cup (C\setminus \{x\})$ is also an independent set of $M_{-y}$, which is a contradiction.

Now, we return to prove that $\gamma(M)\leqslant c(M)$. Take $x\in V$. Since $M$ is not a star, there exists a circuit $C\in\C(M)$ such that $x\in C$. Put $p= |V_x|$. Using Claim 2, we have $p \leqslant |C| \le c(M)$. By our assumption, $M' = M_x$  is a matroid, which is not a star, with $r(M')\geqslant 1$.  

Since the cardinality of the ground set of $M'$ is strictly smaller than $n$, using the induction hypothesis, $q=\gamma(M') \leqslant c(M')$. It is easy to see that $c(M')\leqslant c(M)$, then $q \leqslant c(M)$. Let $B'_1,\ldots, B'_q \in \B(M')$ such that
$$\bigcap_{i=1}^q B'_i =\emptyset.$$
Moreover, $V_y = V_x$ for any $y\in V_x$ by Claim 1. Therefore, $M_y = M'$. This yields $B'\cup (V_x\setminus\{y\}) \in \B(M)$ for any $B'\in \B(M')$. 

Write $V_x = \{x_1,\ldots,x_p\}$ and $W_i = V_x \setminus \{x_i\}$ for $i =1,\ldots,p$.  Put $e := \max\{p,q\}$. For each $i=1,\ldots,e$, we shall define a subset $B_i$ of $V$ as follows:

$$B_i =\begin{cases}
W_i \cup B'_i & \text{ if } i\leqslant \min\{p,q\}; \\
W_p \cup B'_i & \text{ if } p \leqslant i \leqslant q \text { (if  $p\leqslant q$)};\\
W_i \cup B'_q & \text{ if } q \leqslant i \leqslant p \text { (if $q\leqslant p$)}.
\end{cases}
$$
Then, it is clear that $B_1,\ldots, B_e\in\B(M)$. Evidently,
$$
\bigcap_{i=1}^p W_i =\emptyset.
$$
From this, we obtain
$$\bigcap_{i=1}^e B_i =\emptyset.$$
Thus, $\gamma(M) \leqslant e \leqslant c(M)$, as required.
\end{proof}

Now we are in a position to prove the main result of this section.

\begin{thm}\label{Arbor} Let $M$ be a matroid. Then, $a(M) \leqslant c(M^*)$.
\end{thm}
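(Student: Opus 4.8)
The plan is to reduce the statement $a(M)\leqslant c(M^*)$ to Lemma~\ref{CB} by exploiting the duality relation $\gamma(M)=a(M^*)$ established just before Lemma~\ref{H0}. The key observation is that this identity is symmetric under passing to the dual: replacing $M$ by $M^*$ in the relation $\gamma(M)=a(M^*)$ and using $(M^*)^*=M$ gives $\gamma(M^*)=a(M)$. Thus the quantity I actually want to bound, $a(M)$, is literally equal to $\gamma(M^*)$.

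Once this translation is in place, the theorem becomes an immediate consequence of Lemma~\ref{CB} applied to the dual matroid. First I would apply Lemma~\ref{CB} to $M^*$ rather than to $M$, obtaining $\gamma(M^*)\leqslant c(M^*)$. Then I would substitute $\gamma(M^*)=a(M)$ into the left-hand side to conclude $a(M)\leqslant c(M^*)$, which is exactly the assertion. The entire argument is therefore a two-line chain of equalities and one inequality, with all the genuine combinatorial work already discharged in the inductive proof of Lemma~\ref{CB}.

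The one point requiring a moment of care is the standing assumption, noted in the preliminaries, that we may take $M=\core(M)$, i.e.\ that $M$ is not a star. In applying Lemma~\ref{CB} to $M^*$ I must check that the hypotheses of that lemma are met for the dual, in particular that $M^*$ is not a star (the inductive argument of Lemma~\ref{CB} repeatedly invokes ``$M$ is not a star''). This is where I expect the only friction to arise: I would either verify directly that the dual of a non-star matroid on a ground set with $n\geqslant 2$ is again amenable to the lemma, or observe that Lemma~\ref{CB} as stated does not in fact require the non-star hypothesis in its conclusion (since its base case and inductive step handle the degenerate possibilities through Lemma~\ref{H0}). Either way the resolution is routine, and no new combinatorial idea is needed beyond what Lemma~\ref{CB} already provides.

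In summary, the proof is simply: by the displayed equivalence preceding Lemma~\ref{H0}, $\gamma(M^*)=a((M^*)^*)=a(M)$; by Lemma~\ref{CB} applied to $M^*$, $\gamma(M^*)\leqslant c(M^*)$; combining these yields $a(M)\leqslant c(M^*)$, as desired. The main obstacle is purely bookkeeping around the non-star convention, not any substantive mathematical difficulty.
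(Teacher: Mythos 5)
Your proposal is correct and is essentially identical to the paper's own proof: the paper likewise notes that $\gamma(M^*)=a((M^*)^*)=a(M)$, checks that $M^*$ is not a star under the standing convention $M=\core(M)$, and applies Lemma~\ref{CB} to $M^*$ to conclude $a(M)=\gamma(M^*)\leqslant c(M^*)$. No further comment is needed.
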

\begin{proof} By $M$ is not a star, $M^*$ has the same the ground set $V$ and is not a star. Using Lemma \ref{CB}, $a(M)=\gamma(M^*)\leqslant c(M^*)$ as required. 
\end{proof}

The following example shows that the bound in Theorem $\ref{Arbor}$ is shap.

\begin{exm} Let $M$ be a matroid such that $M^*$ has the largest circuit $C^*$ with $r(C^*) = 1$. Then, $|C^*| = c(M^*)r(C^*)$. Together with \cite[Theorem $1$]{Ed}, it yields $a(M) \geqslant c(M^*)$. Therefore, $a(M) = c(M^*)$ by Theorem $\ref{Arbor}$.
\end{exm}

\begin{rem} The above theorem still holds true when $M$ is a star but $V$ is not in $\B(M)$. Indeed, let $W$ be a maximal subset of $V$ (in relation to inclusion) such that $W\subseteq B$ for all $B\in \B(M)$ and $M'=M|(V\setminus W)$. Since $M'$ is not a star and one can check that $a(M)=a(M')$ and $M^*=(M')^*$, we have  $a(M)\leqslant c(M^*)$.
\end{rem}

\begin{cor}\label{MB} Let $M$ be a matroid on the ground set $V$. Then, $$c(M)(|V|-r(M)) \geqslant |V|.$$
\end{cor}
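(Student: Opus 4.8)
The plan is to deduce this purely formally from Theorem \ref{Arbor} applied to the dual matroid, combined with the Edmonds--Nash-Williams formula for the arboricity recalled in the introduction. The key observation is that the left-hand side involves $c(M)$ and $|V|-r(M)$, and both of these are naturally read off from $M^*$: indeed $c(M) = c((M^*)^*)$ is governed by the dual of $M^*$, while $|V|-r(M)$ is exactly the rank $r(M^*)$ by matroid duality. So the statement is really a disguised inequality about $a(M^*)$.

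Concretely, I would first apply Theorem \ref{Arbor} with $M^*$ in place of $M$. Since $(M^*)^*=M$ (noted in the preliminaries) and the standing assumption that $M$ is not a star forces $M^*$ to be non-star as well (as used in the proof of Theorem \ref{Arbor}), the theorem is applicable and yields
$$a(M^*) \leqslant c(M).$$
Next I would bound $a(M^*)$ from below by specializing the Edmonds formula $a(M^*)=\max\{\lceil |A|/r_{M^*}(A)\rceil \mid A\subseteq V\}$ to the single choice $A=V$. Here $r_{M^*}(V) = r(M^*) = |V|-r(M)$, the last equality being the standard duality fact that complements of bases of $M$ are exactly the bases of $M^*$. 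This gives
$$a(M^*) \geqslant \left\lceil \frac{|V|}{|V|-r(M)}\right\rceil \geqslant \frac{|V|}{|V|-r(M)}.$$

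Chaining the two inequalities produces $c(M) \geqslant |V|/(|V|-r(M))$, and clearing the denominator gives the claimed $c(M)(|V|-r(M)) \geqslant |V|$. There is no serious obstacle here; the only points requiring a moment of care are legitimacy of the division and the identification of the dual rank. The division is valid because $M$ is not a star, whence $V\notin\B(M)$ and $r(M)<|V|$, so $|V|-r(M)\geqslant 1>0$; and the identity $r_{M^*}(V)=|V|-r(M)$ is immediate from $\B^*(M)=\{V\setminus B\mid B\in\B(M)\}$. Once Theorem \ref{Arbor} is available, the corollary follows in one step.
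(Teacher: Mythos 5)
Your proposal is correct and follows exactly the same route as the paper: apply Theorem \ref{Arbor} to $M^*$ to get $a(M^*)\leqslant c(M)$, then use Edmonds' formula with the single choice $A=V$ together with the duality identity $r^*(V)=|V|-r(M)$. The only difference is that you spell out the care points (non-star hypothesis, positivity of $|V|-r(M)$) more explicitly than the paper does.
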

\begin{proof} It is clear that $M^*$ has the same ground set $V$ because $M$ is not a star.  Combining \cite[Theorem $1$]{Ed} and  Theorem $\ref{Arbor}$ we get
 $$|V| \leqslant a(M^*)r^*(V)\leqslant c(M)r^*(V);$$
where $r^*(V)$ is rank of $V$ on the dual matroid $M^*$. As $r^*(V) = |V| - r(M)$, so that $|V|\leqslant c(M)(|V|-r(M))$, as required.
\end{proof}

We conclude this section with some remarks on the arboricity of graphs. For a simple graph $G$, the arboricity of the matroid $M(G)$ is also called the arboricity of $G$ and also denoted by $a(G)$. Then, $a(G)$ is the minimum number of spanning forests needed to cover all the edges of $G$. Nash-Williams \cite{NW} gave a precise formula for $a(G)$, namely
$$a(G)=\max\left\{\left\lceil\dfrac{e_H}{n_H-1}\right\rceil \mid  H  \text{ is a nontrivial subgraph of } G\right\},$$
where $e_H$ and $n_H$ are the number of edges and vertices of $H$, respectively. 

However, the estimate of this invariant seems to be of independent interest (see e.g. \cite{AMR,DHS,Se}). By this direction, we now set up an upper bound of $a(G)$ by reformulating Theorem $\ref{Arbor}$ for  the graphic matroid of $G$. Let $c^*(G)$ be the size of a largest bond of $G$. From Theorem $\ref{Arbor}$, we obtain.

\begin{cor} Let $G$ be a simple graph. Then, $a(G) \leqslant c^*(G)$.
\end{cor}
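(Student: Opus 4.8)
The plan is to obtain the corollary by specializing Theorem \ref{Arbor} to the graphic matroid $M(G)$ and then translating the matroid invariants into graph-theoretic terms using the dictionary recorded in the first Example of Section 2. The substantive work is a matter of identification rather than of any new estimation.

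First I would set $M = M(G)$. By definition $a(G) = a(M(G))$, so it suffices to bound $a(M(G))$. Provided $M(G)$ is not a star, Theorem \ref{Arbor} applies directly and gives
$$a(M(G)) \leqslant c\big(M(G)^*\big) = c\big(M^*(G)\big),$$
where $M^*(G)$ denotes the cographic matroid of $G$. Next I would rewrite the right-hand side as $c^*(G)$. Since the circuits of $M^*(G)$ are exactly the bonds of $G$, the circumference $c(M^*(G)) = \max\{|C| \mid C \in \C(M^*(G))\}$ equals the maximum size of a bond of $G$, that is, $c^*(G)$. Chaining the two identities yields $a(G) = a(M(G)) \leqslant c(M^*(G)) = c^*(G)$, as desired.

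The only delicate point — and the one I expect to be the main obstacle — is the standing hypothesis that the matroid be its own core (equivalently, not a star). For $M(G)$ this fails precisely when $G$ has a bridge, since a bridge lies in every spanning forest and hence is a center of a star. In that situation I would appeal to the Remark following Theorem \ref{Arbor}, which extends the inequality $a(M) \leqslant c(M^*)$ to stars as long as $V \notin \B(M)$; for the graphic matroid the condition $V \notin \B(M(G))$ simply means that $E(G)$ is not itself a forest, so one first passes to the maximal set of bridges common to all bases (as in the Remark) without altering either $a(G)$ or the dual.

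Finally, the remaining degenerate case, in which $G$ is a forest (so that $E(G) \in \B(M(G))$ and neither Theorem \ref{Arbor} nor the Remark formally applies), is handled by inspection: then $a(G) \leqslant 1$, while every edge of $G$ is a bond, so $c^*(G) \geqslant 1$ and the inequality holds trivially. Assembling these cases gives the stated bound for an arbitrary simple graph $G$.
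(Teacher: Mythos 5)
Your proposal is correct and follows the same route as the paper, which states the corollary as an immediate specialization of Theorem \ref{Arbor} to the graphic matroid $M(G)$, using the identification of circuits of $M^*(G)$ with bonds of $G$. Your explicit treatment of the degenerate cases (bridges via the Remark, and forests by inspection) is more careful than the paper, which leaves these to its standing assumption $M=\core(M)$.
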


Recall that the circumference of $G$, denoted by $c(G)$, is the size of a longest simple cycle of $G$, so that $c(G)$ is the circumference of $M(G)$ as well. Since $M^*(G))$ is not a star whenever $G$ has no leaves, by reformulating  Theorem $\ref{Arbor}$ for $M^*(G)$  we obtain.

\begin{cor} Let $G$ be a simple graph without leaves. Then, the minimal number of spanning forests of $G$ without edges in common is at most $c(G)$.
\end{cor}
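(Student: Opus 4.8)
The plan is to read this corollary as nothing more than Theorem \ref{Arbor} applied to the cographic matroid $M^*(G)$, followed by unwinding every duality back into graph-theoretic language. First I would record the relevant dictionary entries: the bases of the graphic matroid $M(G)$ are exactly the spanning forests of $G$, so a family of spanning forests has no edge in common precisely when the corresponding family of bases of $M(G)$ has empty intersection. Consequently the quantity ``minimal number of spanning forests of $G$ without edges in common'' is by definition $\gamma(M(G))$, the invariant introduced at the start of Section 3.

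Next I would invoke the identity $\gamma(M) = a(M^*)$ established there, specialized to $M = M(G)$, to rewrite $\gamma(M(G)) = a\big(M(G)^*\big) = a(M^*(G))$. This already reduces the corollary to proving the single bound $a(M^*(G)) \leqslant c(G)$.

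Now I would apply Theorem \ref{Arbor} to the matroid $M^*(G)$ in place of $M$. Since $G$ has no leaves, $M^*(G)$ is not a star, so the standing hypotheses of the theorem are satisfied and it yields $a(M^*(G)) \leqslant c\big((M^*(G))^*\big)$. Using the bidual identity $(M^*(G))^* = M(G)$ together with the fact that the circuits of $M(G)$ are exactly the simple cycles of $G$, so that $c(M(G)) = c(G)$, the right-hand side collapses to $c(G)$. Chaining the three identifications gives
$$\text{(min.\ number of spanning forests without a common edge)} \;=\; a(M^*(G)) \;\leqslant\; c(M(G)) \;=\; c(G).$$

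The only genuinely delicate point is the duality bookkeeping: one must apply Theorem \ref{Arbor} to $M^*(G)$ rather than to $M(G)$ (the latter choice produces instead the previous corollary, with bonds), and one must keep straight that it is $\gamma$ of the graphic matroid, and not its arboricity $a(M(G))$, that measures the edge-disjointness of spanning forests. I would also flag that the ``no leaves'' hypothesis is exactly what makes $M^*(G)$ admissible for the theorem in the first place, and, intuitively, what guarantees that a family of spanning forests with empty common intersection can exist at all, since a leaf edge is a bridge and would therefore lie in every spanning forest.
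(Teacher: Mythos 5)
Your proposal is correct and follows exactly the route the paper intends: the paper derives this corollary simply by applying Theorem \ref{Arbor} to $M^*(G)$ and translating via $a(M^*(G))=\gamma(M(G))$, $(M^*(G))^*=M(G)$ and $c(M(G))=c(G)$, which is precisely your chain of identifications. Your closing remark about the role of the ``no leaves'' hypothesis (bridges lying in every spanning forest) is a sensible observation, consistent with the paper's standing non-star assumption.
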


\section{The Castelnuovo-Mumford regularity of symbolic powers}

Throughout of this section, let $\D$ be the matroid complex of a matroid $M$ with a positive rank on the ground set $V=[n]$. Let $I$ be the Stanley-Reisner ideal of $\D$ in $S=K[x_1,\ldots,x_n]$ and let $t$ be a positive integer. It is well-known that $I_\D^{(t)}$ is Cohen - Macaulay (see \cite{MT,V}), so that in order to compute $\reg(I_\D^{(t)})$ it suffices to investigate the top local cohomology module $H^d_{\m}(S/I_\D^{(t)})$ where $d={r(M)}$. By virtue of Lemma \ref{T}, it leads  to study the (non)-vanishing of some reduced homology groups of certain degree complexes. 

Firstly, we shall prove some lemmas for degree complexes.

\begin{lem} \label{link} Let $\a\in\NN^n$ such that $\Gamma=\D_\a(I^{(t)})\ne\emptyset$. Assume that $n$ is a vertex of $\Gamma$ and $\lk_\D(n)=\{1,\ldots,p\}$ for some $1\leqslant p\leqslant n-1$. Put $\b=(a_1,\ldots,a_p)\in\NN^p$, and $r=a_{p+1}+\cdots+a_{n-1}$ if $p<n-1$ and $r=0$ if $p=n-1$. Then,
$$\lk_{\Gamma}(n)=\D_{\b}(I_{\lk_\D(n)}^{(t-r)}).$$
\end{lem}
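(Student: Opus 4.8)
The plan is to unwind both degree complexes using Lemma~\ref{degreecomplex} and show their facets coincide. First I would observe that since $\a\in\NN^n$ we have $G_\a=\emptyset$, so by Lemma~\ref{degreecomplex} the facets of $\Gamma=\D_\a(I^{(t)})$ are exactly the sets $F\in\F(\D)$ with $\sum_{i\notin F}a_i\leqslant t-1$. To compute $\lk_\Gamma(n)$, recall that a face $H$ of $\Gamma$ lies in $\lk_\Gamma(n)$ precisely when $n\notin H$ and $H\cup\{n\}\in\Gamma$; the facets of $\lk_\Gamma(n)$ are therefore the sets $F\setminus\{n\}$ where $F$ ranges over facets of $\Gamma$ containing $n$. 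So the key is to describe which facets of $\Gamma$ contain $n$.

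Next I would translate the condition ``$F\in\F(\D)$ and $n\in F$'' into data about the matroid link. Since $\D$ is a matroid complex, $\lk_\D(n)$ is again a matroid complex (on the vertex set $\{1,\dots,p\}$ by hypothesis), and its facets are precisely the sets $F'$ with $F'\cup\{n\}\in\F(\D)$ and $n\notin F'$. Thus the facets of $\Gamma$ containing $n$ correspond bijectively, via $F\mapsto F'=F\setminus\{n\}$, to facets $F'$ of $\lk_\D(n)$ satisfying the degree constraint $\sum_{i\notin F}a_i\leqslant t-1$. The main bookkeeping step is to rewrite this constraint in terms of $\b$ and $r$. Writing $F=F'\cup\{n\}$ with $F'\subseteq\{1,\dots,p\}$, the complement $\{1,\dots,n\}\setminus F$ splits into the indices in $\{1,\dots,p\}\setminus F'$ and the indices $\{p+1,\dots,n-1\}$ (note $n\notin$ complement). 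Hence
$$\sum_{i\notin F}a_i=\sum_{i\in\{1,\dots,p\}\setminus F'}a_i+\sum_{i=p+1}^{n-1}a_i=\sum_{i\notin F'\cup G_\b}b_i+r,$$
where $G_\b=\emptyset$ since $\b\in\NN^p$. The inequality $\sum_{i\notin F}a_i\leqslant t-1$ becomes $\sum_{i\notin F'}b_i\leqslant t-1-r$.

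Finally I would apply Lemma~\ref{degreecomplex} a second time, now to the matroid $\lk_\D(n)$ and the exponent $t-r$: its degree complex $\D_\b(I_{\lk_\D(n)}^{(t-r)})$ has facets exactly those $F'\in\F(\lk_\D(n))$ with $\sum_{i\notin F'}b_i\leqslant (t-r)-1$, which is the same inequality derived above. Matching the two facet descriptions gives $\F(\lk_\Gamma(n))=\F(\D_\b(I_{\lk_\D(n)}^{(t-r)}))$, and since a simplicial complex is determined by its facets, the identity of complexes follows. I expect the only delicate point to be handling the edge cases cleanly: one must check that the hypothesis $n$ is a vertex of $\Gamma$ guarantees at least one facet of $\Gamma$ contains $n$ (so $\lk_\Gamma(n)$ is genuinely described by the above, rather than being empty for a spurious reason), and one should verify that $t-r\geqslant 1$ so that the right-hand symbolic power is meaningful — this should follow from the existence of such a facet, since $\sum_{i\notin F'}b_i\geqslant 0$ forces $t-1-r\geqslant 0$. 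Apart from these boundary verifications, the argument is a direct double application of Lemma~\ref{degreecomplex} together with the splitting of the index set.
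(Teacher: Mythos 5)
Your proposal is correct and follows essentially the same route as the paper: both proofs apply Lemma~\ref{degreecomplex} twice (once to $\D$ with exponent $t$, once to $\lk_\D(n)$ with exponent $t-r$) and match facets via $F\mapsto F\setminus\{n\}$ after splitting $\sum_{i\notin F}a_i$ as $\sum_{i\in[p]\setminus F'}b_i+r$; the paper merely writes this out by enumerating the bases of $M$ explicitly, and it records the same boundary check $r\leqslant t-1$ that you flag at the end.
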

\begin{proof} By $n\in\Gamma$ and Lemma \ref{degreecomplex}, there exists $B\in\B(M)$ such that $n\in B$ and $\sum_{i\notin B}a_i\leqslant t-1$. Then, $r\leqslant \sum_{i\notin B}a_i\leqslant t-1$ by $B\setminus\{n\}\subseteq [p]$. Using Lemma \ref{degreecomplex}, we can list 
$$\B(M) =\{B_1,\ldots, B_u, B_{u+1},\ldots, B_{v}, B_{v+1},\ldots, B_{s}, B_{s+1},\ldots, B_{m}\},$$
where
\begin{itemize}
\item $\F(\Gamma)= \{B_1,\ldots, B_u, B_{u+1},\ldots, B_{v}\}$ for $n\in \bigcap_{i=1}^u B_i$ and $n\notin\bigcup_{i=u+1}^v B_i$;
\item $n\in B_i$ for $i=v+1,\ldots,s$; and
\item $n\notin B_i$ for $i=s+1,\ldots,m$.
\end{itemize}
Therefore, $$\F(\lk_{\Gamma}(n))=\{B_1\setminus\{n\},\ldots, B_u\setminus\{n\}\}$$
and $$\F(\lk_{\D}(n))=\{B_1\setminus\{n\},\ldots, B_u\setminus\{n\}, B_{v+1}\setminus\{n\},\ldots, B_s\setminus\{n\} \}.$$
Using again Lemma \ref{degreecomplex}, for $i=1,\ldots, u, v+1,\ldots, s$, one can check that $B_i\setminus\{n\}\in \F(\D_{\b}(I_{\lk_\D(n)}^{(t-r)}))$ if and only if $i=1, \ldots, u$ (note that the set of vertices of $\lk_\D(n)$ is $[p]$). This implies our assertion.
\end{proof}
 
\begin{lem} \label{restrict} Let $\a\in\NN^n$ such that the degree complex $\Gamma=\D_\a(I^{(t)})\ne\emptyset$. Assume $a_n=\min\{a_i\mid i=1,\ldots, n\}$. Put $\b=(a_1,\ldots,a_{n-1})\in\NN^{n-1}$. Then,
$$\Gamma_{-n}=\D_{\b}(I_{\D_{-n}}^{(t-a_n)}).$$
\end{lem}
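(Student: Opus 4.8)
The plan is to compute both sides using the explicit facet description from Lemma \ref{degreecomplex} and show they coincide. First I would unwind the left-hand side: by definition $\Gamma_{-n}=\Gamma[[n]\setminus\{n\}]$ consists of those faces of $\Gamma$ that avoid the vertex $n$. Since $\Gamma=\D_\a(I^{(t)})$ and $\a\in\NN^n$ (so $G_\a=\emptyset$), Lemma \ref{degreecomplex} gives $\F(\Gamma)=\{F\in\F(\D)\mid \sum_{i\notin F}a_i\leqslant t-1\}$. The facets of the restriction $\Gamma_{-n}$ are then the maximal faces among those $F\in\Gamma$ with $n\notin F$, which I would read off from the same list of bases used in Lemma \ref{link}.

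For the right-hand side, I would apply Lemma \ref{degreecomplex} again, now to the ideal $I_{\D_{-n}}^{(t-a_n)}$ over the smaller matroid $M_{-n}=M|([n]\setminus\{n\})$, with exponent vector $\b=(a_1,\ldots,a_{n-1})$ and shifted power $t-a_n$. The hypothesis $a_n=\min\{a_i\}$ enters exactly here: it guarantees $t-a_n\geqslant 0$ (since $\Gamma\ne\emptyset$ forces some base $B$ with $\sum_{i\notin B}a_i\leqslant t-1$, hence $a_n\leqslant t-1$), so the symbolic power is well-defined, and more importantly it lets me convert the inequality over $\D$ into the inequality over $\D_{-n}$. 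Concretely, a facet $F$ of $\D$ with $n\notin F$ is also a facet of $\D_{-n}$ (using that $M$ is not a star, so $r(M_{-n})=r(M)$ and $\B(M_{-n})\subseteq\B(M)$, as recorded in the proof of Lemma \ref{CB}), and for such $F$ the defining condition $\sum_{i\notin F,\,i\in[n-1]}a_i\leqslant (t-a_n)-1$ is equivalent to $\sum_{i\notin F}a_i=a_n+\sum_{i\notin F,\,i\in[n-1]}a_i\leqslant t-1$, which is precisely the membership condition for $F\in\F(\Gamma)$.

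The main obstacle, and the place where $a_n$ being minimal is genuinely used rather than just for nonnegativity, is to rule out facets of $\D_{-n}$ that are \emph{not} facets of $\D$. A priori $\F(\D_{-n})$ could contain a set $F$ with $|F|=r(M)$ that becomes a basis of $M_{-n}$ only after deleting $n$, but this cannot happen: since $M$ is not a star, every basis of $M_{-n}$ is a basis of $M$, so $\F(\D_{-n})=\{B\in\B(M)\mid n\notin B\}$ exactly. Thus the two facet sets are built from the identical collection of candidate bases, and the numerical conditions match term-by-term, giving $\F(\Gamma_{-n})=\F(\D_{\b}(I_{\D_{-n}}^{(t-a_n)}))$. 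I would finish by noting that equality of facet sets yields equality of the simplicial complexes, establishing the claim.
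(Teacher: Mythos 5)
There is a genuine gap, and it sits exactly at the point your proposal glosses over. The faces of $\Gamma$ avoiding $n$ are of two kinds: the facets $B$ of $\Gamma$ with $n\notin B$, and the sets $B\setminus\{n\}$ for facets $B$ of $\Gamma$ with $n\in B$. Your argument implicitly identifies $\F(\Gamma_{-n})$ with the first kind only, but a set $B\setminus\{n\}$ of the second kind is a maximal face of $\Gamma_{-n}$ (of dimension $r(M)-2$) unless it happens to be contained in some facet of $\Gamma$ that avoids $n$. If any such $B\setminus\{n\}$ survives as a facet, $\Gamma_{-n}$ is not pure and cannot equal $\D_{\b}(I_{\D_{-n}}^{(t-a_n)})$, which by Lemma \ref{degreecomplex} is pure of dimension $r(M)-1$ when nonempty. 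This is not a hypothetical worry: the Remark following the lemma in the paper exhibits the square matroid with $\a=(1,8,3,2)$, $t=11$, where $\Gamma_{-3}$ is not pure precisely because $a_3$ is not minimal. The paper's proof spends its entire effort on this point: given a facet $B_i\ni n$ of $\Gamma$, the set $B_i\setminus\{n\}$ is contained in some basis $B_p$ of $M$ with $n\notin B_p$ (so $B_i\setminus\{n\}=B_p\setminus\{q\}$), and the computation $\sum_{j\notin B_p}a_j=\sum_{j\notin B_i}a_j+a_n-a_q\leqslant t-1$, which uses $a_n\leqslant a_q$, forces $B_p$ to be a facet of $\Gamma$. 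Hence $B_i\setminus\{n\}$ is not a facet of $\Gamma_{-n}$.

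Relatedly, you mislocate where the minimality of $a_n$ is used. The step you flag as the "main obstacle" --- ruling out facets of $\D_{-n}$ that are not facets of $\D$ --- needs no hypothesis on $\a$ at all: since $M$ is not a star, $r(M_{-n})=r(M)$ and every basis of $M_{-n}$ is a basis of $M$, full stop. And the term-by-term matching of the numerical conditions for bases avoiding $n$ is pure arithmetic (subtract $a_n$ from both sides), again not using minimality. The hypothesis $a_n=\min\{a_i\}$ is needed only for the purity argument described above, which your proof omits. The rest of your computation (the correspondence between the two inequality conditions on bases not containing $n$) is correct and matches the paper's final step.
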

\begin{proof} In the same way as in above, we can list 
$$\B(M) =\{B_1,\ldots, B_u, B_{u+1},\ldots, B_{v}, B_{v+1},\ldots, B_{s}, B_{s+1},\ldots, B_{m}\},$$
where
\begin{itemize}
\item $\F(\Gamma)= \{B_1,\ldots, B_u, B_{u+1},\ldots, B_{v}\}$ for $n\in \bigcap_{i=1}^u B_i$ and $n\notin\bigcup_{i=u+1}^v B_i$;
\item $n\in B_i$ for $i=v+1,\ldots,s$; and
\item $n\notin B_i$ for $i=s+1,\ldots,m$.
\end{itemize}
Therefore, $$\F(\Gamma_{-n})=\{B_1\setminus\{n\},\ldots, B_u\setminus\{n\}, B_{u+1},\ldots, B_{v}\}.$$
On the other hand, since $M$ is a matroid and not a star, $M|[n-1]$ is a matroid with the same rank. Thus,
$$\F(\D_{-n})=\{B_{s+1},\ldots, B_m, B_{u+1},\ldots, B_{v}\}.$$
This yields, for each $i=1,\ldots,u$, there exists $p\in\{s+1,\ldots,m,u+1,\ldots,v\}$ such that $B_i\setminus\{n\}=B_p\setminus\{q\}$ for some $q\in B_p$. If $p\in\{s+1,\ldots,m,\}$, then 
\begin{align*}
\sum_{j\in[n],j\notin B_p}a_j=\sum_{j\in[n],j\notin B_p\setminus\{q\}}a_j-a_q=\sum_{j\in[n],j\notin B_i\setminus\{n\}}a_i-a_q\\
=\sum_{j\in[n],j\notin B_i}a_i+a_n-a_q\leqslant \sum_{j\in[n],j\notin B_i}a_i\leqslant t-1
\end{align*}
by Lemma \ref{degreecomplex}, which is a contradiction. Hence, $p\in\{u+1,\ldots,v\}$. It follows 
 $$\F(\Gamma_{-n})=\{B_{u+1},\ldots, B_{v}\}.$$ 
Therefore, our assertion will come from Lemma \ref{degreecomplex}. 
\end{proof}

\begin{rem}
The condition $a_n=\min\{a_i\mid i=1,\ldots, n\}$ in this Lemma can not remove. For instance, we consider an example in which $\Gamma_{-n}$ do not need pure. Let $M$ be a matroid which forms a square i.e. $\B(M)=\{\{1,2\},\{2,3\}, \{3,4\}, \{4,1\}\}$. Let $\a=(1,8,3,2)\in\NN^4$ and $t=11$. By Lemma \ref{degreecomplex}, the degree complex $\Gamma=\D_\a(I^{(t)})$ has the facet set $\{\{1,2\},\{2,3\}, \{3,4\}\}$. From this, $\Gamma_{-3}$ is not pure.
\end{rem}

\begin{lem}\label{circ-link} Let $M$ be a matroid and $x$ is a vertex of $M$. Let $\Delta$ be the simplicial complex of $M$ and let $M'$ be the matroid w.r.t. $\lk_{\D}(x)$. Then, $c(M') \leqslant c(M)$.
\end{lem}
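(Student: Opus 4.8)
The plan is to realize $M'$ as the contraction $M/x$ and then compare circuits of $M$ and $M'$ directly. First I would record that, since $x$ is a vertex of $\D$, the singleton $\{x\}$ is a face, so $x$ is not a loop of $M$; moreover, by the definition of the link, a set $I$ lies in $\lk_\D(x)$ if and only if $I\cup\{x\}\in\D=\I(M)$. Hence a set is independent in $M'$ exactly when its union with $\{x\}$ is independent in $M$, which identifies $M'$ with $M/x$. If $M'$ has no circuit the inequality is trivial, so I assume $\C(M')\ne\emptyset$ and fix a circuit $C'\in\C(M')$ realizing the circumference, $|C'|=c(M')$.

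The heart of the argument is to produce from $C'$ a circuit of $M$ that is at least as large. Translating the circuit conditions on $C'$ through the link dictionary: that $C'$ is dependent in $M'$ means $C'\cup\{x\}\notin\I(M)$, i.e. $C'\cup\{x\}$ is dependent in $M$; and for each $y\in C'$ the minimality of $C'$ gives $C'\setminus\{y\}\in\I(M')$, i.e. $(C'\cup\{x\})\setminus\{y\}\in\I(M)$. Since $C'\cup\{x\}$ is dependent in $M$, it contains a circuit $C\in\C(M)$ with $C\subseteq C'\cup\{x\}$.

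Next I would show $C'\subseteq C$. If some $y\in C'$ were not in $C$, then $C\subseteq(C'\cup\{x\})\setminus\{y\}$, which is independent in $M$ by the previous paragraph; but an independent set contains no circuit, a contradiction. Hence every $y\in C'$ lies in $C$, so $C'\subseteq C\subseteq C'\cup\{x\}$, which forces $C=C'$ or $C=C'\cup\{x\}$. In the first case $C'$ is itself a circuit of $M$, so $c(M')=|C'|\leqslant c(M)$; in the second $|C'|=|C|-1\leqslant c(M)$. Either way $c(M')\leqslant c(M)$.

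The one genuinely substantive step — the part I expect to be the real content rather than bookkeeping — is the inclusion $C'\subseteq C$, since this is precisely where the \emph{minimality} of the circuit $C'$ in $M'$ is converted, via the link/contraction correspondence, into the assertion that the $M$-circuit $C$ cannot avoid any element of $C'$. Everything else is a direct unwinding of the definition of the link together with the axioms for independent sets and the fact that every dependent set contains a circuit.
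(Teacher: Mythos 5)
Your proof is correct and follows essentially the same route as the paper: both identify $M'$ with the contraction $M/x$ and show that a maximum circuit $C'$ of $M'$ yields a circuit of $M$ equal to either $C'$ or $C'\cup\{x\}$. The only cosmetic difference is that you extract a circuit $C\subseteq C'\cup\{x\}$ and prove $C'\subseteq C$ by minimality, whereas the paper splits directly on whether $C'$ is independent in $M$; the two case analyses coincide.
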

\begin{proof} Observe that 
$$M' = \{F \setminus \{x\} \mid F\in M \text{ and } x\in F\}.$$

Let $C$ be a circuit of $M'$ such that $|C|=c(M')$. Then, for any $v\in C$, $C\setminus\{v\}\in M$, so  $(C\setminus\{v\})\cup \{x\}\in M$. If $C\notin M$, then it is also a circuit of $M$, and then $|C|\leqslant c(M)$. If $C\in M$, then $C\cup \{x\} \notin M$ by definition, and then $C\cup \{x\}$ is a circuit of $M$. Hence, $c(M') < |C\cup\{x\}| \leqslant c(M)$, as required.
\end{proof}

\begin{lem} \label{upper} Let $\a\in\NN^n$ such that $\Gamma=\D_\a(I^{(t)})$ is not acyclic.
Then, $$|\a|=\sum_{i=1}^n a_i \leqslant c(M)(t-1).$$
\end{lem}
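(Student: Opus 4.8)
The plan is to induct on $n=|V|$, peeling off the vertex of least weight and using a Mayer--Vietoris sequence to push non-acyclicity onto a deletion or a link, where Lemmas \ref{restrict}, \ref{link} and \ref{circ-link} apply. Since $\a\in\NN^n$ we have $G_\a=\emptyset$, so by Lemma \ref{degreecomplex} the facets are $\F(\Gamma)=\{B\in\B(M)\mid \sum_{i\notin B}a_i\leqslant t-1\}$. Relabel so that $a_n=\min_i a_i$. Writing $\Gamma=\Gamma_{-n}\cup\st_\Gamma(n)$ with $\st_\Gamma(n)$ contractible and $\Gamma_{-n}\cap\st_\Gamma(n)=\lk_\Gamma(n)$, the reduced Mayer--Vietoris sequence gives exactness of $\widetilde H_i(\Gamma_{-n})\to\widetilde H_i(\Gamma)\to\widetilde H_{i-1}(\lk_\Gamma(n))$, so if $\Gamma$ is not acyclic then at least one of $\Gamma_{-n}$, $\lk_\Gamma(n)$ is not acyclic. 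In either branch the smaller matroid is again coloopless: a coloop lies in every basis, hence in every facet, making the degree complex a cone and thus acyclic, contrary to the non-vanishing we propagate. The base case $n=2$ is immediate (then $c(M)=2$ by Lemma \ref{H0}).

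In the deletion branch, Lemma \ref{restrict} gives $\Gamma_{-n}=\D_\b(I_{M_{-n}}^{(t-a_n)})$ with $\b=(a_1,\dots,a_{n-1})$, where $M_{-n}=M|[n-1]$ has the same rank as $M$ and $c(M_{-n})\leqslant c(M)$ since its circuits are circuits of $M$. Induction yields $|\b|\leqslant c(M_{-n})(t-a_n-1)$, so $|\a|=|\b|+a_n\leqslant c(M)(t-a_n-1)+a_n=c(M)(t-1)-(c(M)-1)a_n\leqslant c(M)(t-1)$, using $c(M)\geqslant 1$. This branch is routine.

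In the link branch, set $\lk_\D(n)=\{1,\dots,p\}$ and $r=a_{p+1}+\cdots+a_{n-1}$. Lemma \ref{link} gives $\lk_\Gamma(n)=\D_\b(I_{M'}^{(t-r)})$ with $\b=(a_1,\dots,a_p)$ and $M'$ the matroid of $\lk_\D(n)$, which is coloopless by Lemma \ref{star}; induction gives $\sum_{i=1}^p a_i\leqslant c(M')(t-r-1)$, and $c(M')\leqslant c(M)$ by Lemma \ref{circ-link}. Hence $|\a|=\sum_{i=1}^p a_i+r+a_n\leqslant c(M)(t-1)-(c(M)-1)r+a_n$. If $p<n-1$ then $r\geqslant a_{p+1}\geqslant a_n$ by minimality of $a_n$, and as $c(M)\geqslant 2$ the surplus $-(c(M)-1)r+a_n\leqslant 0$, closing this case. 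The one remaining configuration, $p=n-1$ (so $r=0$ and $n$ is independent from every other vertex), is where I expect the real difficulty: the estimate then leaves the uncompensated term $a_n$, and indeed $c(M')=c(M/n)$ can equal $c(M)$ (for instance $M=U_{2,3}\oplus U_{2,3}$ with $n$ in one summand), so contracting at $n$ discards exactly the budget one needs.

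To resolve this crux I would abandon the single-vertex recursion and argue globally. Non-acyclicity forces $\Gamma$ not to be a cone, hence $\bigcap_{B\in\F(\Gamma)}B=\emptyset$; and for any $B_1,\dots,B_m\in\F(\Gamma)$ with $\bigcap_j B_j=\emptyset$, summing the defining inequalities gives $|\a|=\sum_i a_i\leqslant\sum_{j=1}^m\sum_{i\notin B_j}a_i\leqslant m(t-1)$. So it suffices to exhibit such a subfamily of size $\leqslant c(M)$ \emph{inside} $\F(\Gamma)$. Here $\gamma(M)=a(M^*)\leqslant c(M)$ (Lemma \ref{CB}) supplies a cover of size $\leqslant c(M)$ among all bases, but the threshold condition is indispensable: a minimal cover by arbitrary bases can exceed $c(M)$, whereas the constraint that every non-facet $B$ has $\sum_{i\notin B}a_i\geqslant t$ forbids the frustrated configurations that cause this. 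For the $p=n-1$ case I would take the inductive size-$c(M')$ cover of $M'=M/n$ coming from the link, lift it to bases of $M$ through $n$, and absorb the coverage of $n$ into one of them using a circuit of $M$ that avoids $n$ and realizes $c(M')=c(M)$ — the slack that keeps the count at $c(M)$. Proving that a subcover can always be chosen within the threshold family $\F(\Gamma)$ is, I expect, the main obstacle.
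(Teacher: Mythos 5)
Your reduction to the link/deletion dichotomy via Mayer--Vietoris matches the paper's use of the exact sequence from \cite[Lemma 2.1]{H}, and your deletion branch and your link branch with $p<n-1$ are fine. But you have correctly located, and not closed, the genuine gap: the configuration where the surplus term $a_n>0$ survives (in your bookkeeping, $p=n-1$ so $r=0$). Your proposed repair --- finding at most $c(M)$ facets of $\Gamma$ itself with empty intersection, so that summing the threshold inequalities $\sum_{i\notin B_j}a_i\leqslant t-1$ bounds $|\a|$ --- rests on an unproved claim. Lemma \ref{CB} gives a subfamily of size at most $c(M)$ inside all of $\B(M)$, not inside the threshold family $\F(\Gamma)$, and $\F(\Gamma)$ need not be the basis set of a matroid, so nothing in the paper's toolkit delivers such a subcover. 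As written, the proof is incomplete.

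The paper's way around this is different and is worth internalizing: it inducts on $t+n$ rather than on $n$, and disposes of the case $\min_i a_i\geqslant 1$ \emph{before} ever splitting into link and deletion. If every $a_i\geqslant 1$, set $\b=\a-(1,\dots,1)$; by Lemma \ref{degreecomplex} the degree complex is unchanged provided one also lowers $t$ to $t-(n-r(M))$, since every basis complement has exactly $n-r(M)$ elements. The induction hypothesis then gives $|\b|\leqslant c(M)\bigl(t-(n-r(M))-1\bigr)$, and the added $n$ in $|\a|=|\b|+n$ is absorbed by the inequality $n\leqslant c(M)(n-r(M))$ of Corollary \ref{MB} --- which is exactly where Theorem \ref{Arbor} (the arboricity bound, i.e.\ Lemma \ref{CB}) enters the proof. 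After this reduction one may assume $a_n=\min_i a_i=0$, and then both of your branches close with no leftover term: in the link branch $|\a|=|\b|+r+a_n=|\b|+r\leqslant c(M)(t-r-1)+r\leqslant c(M)(t-1)$, and in the deletion branch $t-a_n=t$ and $|\a|=|\b|$. So the missing idea is not a stronger covering statement inside $\F(\Gamma)$, but the normalization step $\a\mapsto\a-(1,\dots,1)$ combined with Corollary \ref{MB}.
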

\begin{proof} We now proceed by induction on $t+n\geqslant 3$. If $t + n = 3$, then $t=1$ and $n=2$. In particular, $\B(M)=\{1,2\}$ and $c(M)=2$. In this case, one can see that $\Delta_{\a}(I) =\D$. Lemma \ref{degreecomplex} yields
$$a_1 \leqslant 0 \text{ and } a_2 \leqslant 0,$$
i. e. $a_1=a_2 = 0$. Therefore, $|\a| = 0 = c(M)(t-1)$, as required.

Assume $t + n \geqslant 4$ and $a_n =\min\{a_i\mid i=1,\ldots, n\}$. If $a_n \geqslant 1$, put $b_i=a_i-1$ for all $i$ and $\b=(b_1,\ldots,b_n)\in \NN^n$. Then, for any $B\in \B(M)$, we have
$$\sum_{i\notin B} b_i = \sum_{i\notin B} a_i -(n-r(M)).$$
Applying Lemma \ref{degreecomplex} and $\Gamma\ne\emptyset$, $$\Delta_{\b}(I^{(t-(n-r(M)))}) = \Delta_{\a}(I^{(t)}).$$
By $(t-(n-r(M)))+ n < t+n$ and the induction hypothesis, we obtain
$$|\b| \leqslant c(M)(t-(n-r(M))-1).$$
Therefore, by Corollary $\ref{MB}$, $$|\a| =|\b|+n  \leqslant c(M)(t-1) + n - c(M)(n-r(M))\leqslant c(M)(t-1).$$ 

Assume $a_n = 0$. By \cite[Theorem 1.6]{MT} we have $\Gamma$ is not acyclic and is Cohen–Macaulay of dimension $d=r(M)-1$, hence $\h_d(\Gamma;K)\ne 0$. By \cite[Lemma 2.1]{H}, we have the following exact sequence:
$$\h_{d-1}(\lk_\Gamma(n); K) \longrightarrow \h_d(\Gamma;K) \longrightarrow \h_id(\Gamma_{-n}; K).$$
Since $\h_d(\Gamma;K)\ne 0$, either $\h_{d-1}(\lk_\Gamma(n); K)\ne 0$ or $\h_d(\Gamma_{-n}; K)\ne 0$.

\medskip

We next distinguish two cases:

\medskip

\noindent {\it Case $1$}: $\h_{d-1}(\lk_\Gamma(n); K)\ne 0$. Applying Lemma \ref{star}, the simplicial complex $\lk_\Gamma(n)$ is matroid of dimension $d-1$ which is not a cone. Then, it is also not acyclic by Lemma \ref{acyclic}. One may assume the set of vertices of $\lk_{\D}(n)$ is $\{1,\ldots,p\}$ for some $1\leqslant p\leqslant n-1$. Let $\b = (a_1,\ldots,a_p)\in \NN^p$, and $r = a_{p+1}+\cdots+a_{n-1}$ if $p<n-1$ and $r=0$ if $p=n-1$. Then, by Lemma \ref{link},
$$\lk_{\Gamma}(n) = \Delta_{\b}(I_{\lk_{\D}(n)}^{(t-r)}).$$
It can see that $(t-r)+p < t + n$. Applying again Lemma \ref{star} and Lemma \ref{acyclic}, $\lk_\D(n)$ is the non-acyclic matroid complex of a matroid $M'$. By the induction hypothesis and Lemma $\ref{circ-link}$ we have $|\b| \leqslant c(M')(t-r-1)\leqslant c(M)(t-r-1)$. Thus,
\begin{align*}
|\a| = |\b|+(a_{p+1}+\cdots+a_n) = |\b|+r +a_n = |\b| +r\\
\leqslant c(M)(t-r-1)+r \leqslant c(M)(t-1). 
\end{align*}

\noindent {\it Case $2$}: $\h_{d}(\Gamma_{-n}; K)\ne 0$. Then, by Lemma \ref{acyclic}, $\Gamma_{-n}$ is matroid of dimension $d$ which is not a cone on the ground set $\{1,\ldots,n-1\}$. Let $\b = (a_1,\ldots,a_{n-1})\in \NN^{n-1}$. Then, by Lemma \ref{restrict},
$$\Gamma_{-n}= \Delta_{\b}(I_{\D_{-n}}^{(t-a_n)}) = \Delta_{\b}(I_{\D_{-n}}^{(t)}) .$$
If $\D_{-n}$ is a cone, then so is $\Gamma_{-n}$ by Lemma \ref{degreecomplex}, a contradiction. Hence, $\Gamma_{-n}$ is a matroid complex which is not a cone on the ground set $\{1,\ldots,n-1\}$. By the induction hypothesis, $|\b| \leqslant c(M_{-n})(t-1)$. Since $\C(M_{-n}) \subseteq \C(M)$, we have $c(M_{-n}) \leqslant c(M)$, so
$$|\a| = |\b| + a_n = |\b| \leqslant c(M)(t-1).$$
Thus, $|\a| \leqslant c(M)(t-1)$ in both cases, and the proof  is complete.
\end{proof}

We are now in a position to prove the main result of this paper.

\begin{thm}\label{regsym} Let $\D$ be the simplicial complex of a matroid $M$.  Then, 
$$\reg(I_{\Delta}^{(t)}) = c(M)(t-1)+r(\core(M))+1, \ \text{ for all } t\geqslant 1.$$
\end{thm}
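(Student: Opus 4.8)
The plan is to exploit that $S/I^{(t)}$ is Cohen--Macaulay of dimension $d=r(M)$, so that $H^i_\m(S/I^{(t)})=0$ for $i\ne d$ and hence $\reg(S/I^{(t)})=a_d(S/I^{(t)})+d$, while $\reg(I^{(t)})=\reg(S/I^{(t)})+1$. Since we assume $M=\core(M)$, the asserted formula is equivalent to the single equality $a_d(S/I^{(t)})=c(M)(t-1)$. By Lemma \ref{T} this reduces to finding the largest $|\a|$, over $\a\in\ZZ^n$ with $G_\a\in\D$, for which $\h_{d-|G_\a|-1}(\D_\a(I^{(t)});K)\ne 0$. I would establish the two inequalities separately.

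For the upper bound I would first reduce to $\a\in\NN^n$. Writing $g=|G_\a|$ and letting $\a^+$ be the restriction of $\a$ to $[n]\setminus G_\a$, Lemma \ref{degreecomplex} shows the facet description of $\D_\a(I^{(t)})$ depends only on the nonnegative entries, giving $\D_\a(I^{(t)})=\D_{\a^+}(I_{\lk_\D(G_\a)}^{(t)})$. The nonvanishing of its top homology $\h_{d-g-1}$ means this complex, the matroid complex of $\lk_\D(G_\a)$ trimmed by $\a^+$, is not acyclic; note that $\lk_\D(G_\a)$ is not a cone by Lemma \ref{star}, so Lemma \ref{upper} applies to it and yields $|\a^+|\le c(\lk_\D(G_\a))(t-1)$. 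Iterating Lemma \ref{circ-link} over the elements of $G_\a$ gives $c(\lk_\D(G_\a))\le c(M)$, and since the entries of $\a$ on $G_\a$ are negative, $|\a|\le|\a^+|\le c(M)(t-1)$; the degenerate case $g=d$ follows directly from $\sum_{i\notin G_\a}a_i\le t-1$.

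For the lower bound I would exhibit a single $\a$ attaining the value. Let $C$ be a circuit with $|C|=c(M)$, and set $a_i=t-1$ for $i\in C$ and $a_i=0$ otherwise, so $|\a|=c(M)(t-1)$ and $G_\a=\emptyset$. For $t\ge 2$, Lemma \ref{degreecomplex} shows the facets of $\Gamma:=\D_\a(I^{(t)})$ are the bases $B$ with $|C\setminus B|\le 1$; as $C$ is dependent this forces $B\cap C=C\setminus\{x\}$ for some $x\in C$. The crucial structural observation is that $\Gamma$ decomposes as a join: letting $N$ be the contraction matroid on $[n]\setminus C$ whose bases are the sets $B\setminus C$ arising here, every such $B$ equals $(C\setminus\{x\})\cup B'$ with $B'\in\B(N)$, and conversely every such pair glues to a basis of $M$. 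Thus $\Gamma$ is the join of the boundary of the simplex on $C$ (a $(c(M)-2)$-sphere) with the matroid complex of $N$, and by the join formula for reduced homology $\Gamma$ is non-acyclic once the complex of $N$ is, i.e.\ once $N$ has no coloop (Lemma \ref{acyclic}). A direct rank computation shows $v\in[n]\setminus C$ is a coloop of $N$ if and only if it is a coloop of $M$; as $M=\core(M)$ is not a star, it has no coloops, hence neither does $N$. Therefore $\h_{d-1}(\Gamma;K)\ne 0$, so $H^d_\m(S/I^{(t)})_\a\ne 0$ and $a_d\ge c(M)(t-1)$.

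The main obstacle is the lower bound, specifically checking that this explicit degree complex is not acyclic: the identification $\Gamma\cong\partial\Delta_C * \D_N$ and the coloop computation are the heart of the matter, since they convert the global non-acyclicity of $\Gamma$ into the purely matroid-theoretic fact that a coreless matroid stays coreless under this contraction. Once both bounds are in place, $a_d(S/I^{(t)})=c(M)(t-1)$ together with the Cohen--Macaulay reduction gives $\reg(I^{(t)})=c(M)(t-1)+r(\core(M))+1$ for all $t\ge 1$, the case $t=1$ reducing to $\a=\mathbf{0}$ and $\Gamma=\D$.
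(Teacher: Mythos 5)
Your proposal is correct, and while the Cohen--Macaulay reduction and the upper bound $|\a|\leqslant c(M)(t-1)$ follow the paper's argument almost verbatim (restrict $\a$ to its nonnegative part, pass to $\lk_\D(G_\a)$ via Lemma \ref{degreecomplex} and Lemma \ref{star}, then apply Lemma \ref{upper} and Lemma \ref{circ-link}), your lower bound is genuinely different. The paper proves the existence of an extremal multidegree by induction on $n$: it deletes a vertex $n$ outside a maximum circuit, invokes Lemma \ref{restrict} to identify $\Gamma_{-n}$ with a degree complex of $\D_{-n}$, and then uses the exact sequence $\h_{d-1}(\Gamma)\to\h_{d-1}(\Gamma_{-n})\to\h_{d-1}(\lk_\Gamma(n))$ together with the dimension count $\dim\lk_\Gamma(n)=d-2$ to lift nonvanishing homology back to $\Gamma$. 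You instead exhibit the extremal vector explicitly ($a_i=t-1$ on a maximum circuit $C$, zero elsewhere) and identify $\D_\a(I^{(t)})$ with the join $\partial\Delta_C * \D_{M/C}$; your verification that the facets are exactly $(C\setminus\{x\})\cup B'$ with $B'\in\B(M/C)$ is the standard description of bases of a contraction, and your observation that coloops of $M/C$ are coloops of $M$ correctly reduces non-acyclicity to $M=\core(M)$. This buys a more transparent picture of where the top local cohomology lives (a sphere of dimension $c(M)-2$ joined with the contraction complex) and avoids both the induction and Hibi's exact sequence; the only point you should make explicit is that passing from ``$\D_{M/C}$ is not acyclic'' to ``$\h_{d-1}(\Gamma)\ne 0$'' requires the homology of a matroid complex to be concentrated in its top dimension $d-c(M)$ (matroid complexes are shellable, hence Cohen--Macaulay), since the join formula only converts $\h_{j}(\D_{M/C})\ne 0$ into $\h_{j+c(M)-1}(\Gamma)\ne 0$ and Takayama's formula needs the specific degree $d-1$. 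The paper itself uses this concentration fact (via \cite[Theorem 1.6]{MT}) inside Lemma \ref{upper}, so this is an omission of a citation rather than a gap in substance.
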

\begin{proof} Without loss of generality, we may assume that $M=\core(M)$ so that $\D$ is not a cone. Let $I = I_{\Delta}$ and $d = r(M)$ so that $d = \dim S/I^{(t)}$. From \cite[Theorem 3.5]{MT}, we have $I^{(t)}$ is Cohen-Macaulay, hence $a_{i}(S/I^{(t)})=-\infty$ for all $i < d$. Since $\reg(I^{(t)}) = \reg S/I^{(t)} +1$, it remains to prove that $a_{d}(S/I^{(t)}) = c(M)(t-1)$. 

Let $\a$ be a vector in $\ZZ^n$  such that $H_\m^d(S/I^{(t)})_\a\ne 0$. Because $\Delta(I^{(t)}) = \Delta$, by Lemma \ref{T} we have $\h_{d-|G_\a|-1}(\D_\a(I^{(t)});K)\ne 0$ and $G_\a \in\Delta$. Let $\Gamma=\lk_{\D}(G_\a)$. By Lemma \ref{star},  $\Gamma$ is a matroid complex, which is not a cone, of dimension $(d-1)-|G_\a|$ .

Define $\b \in \NN^n$  by
$$
b_i=
\begin{cases}
a_i & \text{ if } a_i \geqslant 0,\\
0 & \text{ otherwise.}
\end{cases}
$$
Then, by applying Lemma $\ref{degreecomplex}$ we conclude that $\D_\a(I^{(t)})= \D_\b(I_{\Gamma}^{(t)})$. In particular, $ \D_\b(I_{\Gamma}^{(t)})$ is not acyclic. Let $M'$ be the matroid w.r.t. $\Gamma$. Then, $c(M')\leqslant c(M)$ according to Lemma $\ref{circ-link}$.  On the other hand, $|\b|\leqslant c(M')(t-1)$ by Lemma \ref{upper}. Therefore, $|a| \leqslant |b| \leqslant c(M)(t-1)$. It follows that $a_d(R/I^{t}) \leqslant c(M)(t-1)$. 

For the reverse inequality $a_{d}(S/I^{(t)}) \geqslant c(M)(t-1)$, it suffices to prove that there is  a vector $\a\in\NN^n$ such that $\h_{d-1}(\D_\a(I^{(t)});K)\ne 0$ with $|\a|\geqslant c(M)(t-1)$. We will prove the existence of such a vector by induction on $n$. 

Observe that if $c(M)=n$, then $\B(M)=\{[n]\setminus \{i\}\mid  i\in [n]\}$ by Lemma $\ref{H0}$. Let $\a = (t-1,\ldots,t-1)\in \NN^n$. By Lemma $\ref{degreecomplex}$ we have $\D_\a(I^{(t)}) = \D$, so $\a$ is a desirable vector by Lemma $\ref{acyclic}$.

We return to prove the existence of $\a$ by induction on $n\geqslant 2$. If $n=2$, then $c(M) = 2$ because $M$ is not a star, and then $c(M) = n$. This case has done by the above observation.

If $n>2$, let $C$ be a circuit of $M$ such that $|C|=c(M)$. By the above observation we may assume that $|C|<n$, so we can assume that $C=\{1,\ldots,c\}$ for $c<n$. Let $M'$ be the matroid w.r.t. $\D_{-n}$. Note that $\C(M')\subseteq C(M)$ and $C$ is also a circuit of $M'$, so  $c(M')=c(M)$. By the induction hypothesis, there is a vector $\b\in\NN^{n-1}$ such that $|\b| \geqslant c(M')(t-1)$ and $\h_{d-1}(\D_\b(I_{\D_{-n}}^{(t)});K)\ne 0$. Put $\a=(b_1,\ldots,b_{n-1},0)\in\NN^n$ and $\Gamma = \D_\a(I^{(t)})$. By Lemma $\ref{degreecomplex}$ we see that $\D_\b(I_{\D_{-n}}^{(t)})$ is a subcomplex of $\Gamma$, so $\Gamma\ne\emptyset$.  By Lemma \ref{restrict}, we have $\Gamma_{-n}=\D_{\b}(I_{\D_{-n}}^{(t-a_n)})=\D_{\b}(I_{\D_{-n}}^{(t)})$, hence $\h_{d-1}(\Gamma_{-n}; K)\ne 0$. Note that $\dim(\lk_{\Gamma}(n)) =  d-2$, so $\h_{d-1}(\lk_{\Gamma}(n)); K)= 0$. By \cite[Lemma 2.1]{H}, we have the following exact sequence
$$\h_{d-1}(\Gamma;K) \longrightarrow \h_{d-1}(\Gamma_{-n}; K)\longrightarrow \h_{d-1}(\lk_\Gamma(n); K).$$
Because $\h_{d-1}(\lk_{\Gamma}(n)); K)= 0$ and $\h_{d-1}(\Gamma_{-n}; K)\ne 0$,  it yields $\h_{d-1}(\Gamma; K)\ne 0$. Note that $c(M')=c(M)$, so $|a| = |b| \geqslant c(M)(t-1)$, and the proof is complete.
\end{proof}

As a corollary, we can obtain a characterization of a matroid whose the $t$-th symbolic power of a Stanley-Reisner ideal has linear resolution.

\begin{thm}\label{regsym-cor} Let $\D$ be the matroid complex of a matroid $M$ on the ground set $V=[n]$. Let $I$ be the Stanley-Reisner ideal of $\D$. Then, the following conditions are equivalent:
\begin{enumerate}
\item[(i)] $I^{(t)}$ has linear resolution for all $t\geqslant 1$,
\item[(ii)] $I^{(t)}$ has linear resolution for some $t\geqslant 1$,
\item[(iii)] $M$ forms as $U_{k,n}$ for some $1\leqslant k<n$.
\end{enumerate}
\end{thm}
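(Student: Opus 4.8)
The plan is to establish the chain of equivalences by proving the cyclic implications $(i)\Rightarrow(ii)\Rightarrow(iii)\Rightarrow(i)$, using Theorem \ref{regsym} as the central computational tool. The implication $(i)\Rightarrow(ii)$ is trivial. The substantive content lies in relating the linear-resolution property to the combinatorial invariants $c(M)$ and $r(\core(M))$ appearing in the regularity formula.

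First I would recall that $I^{(t)}$ has a linear resolution if and only if $\reg(I^{(t)})$ equals the degree $d_t$ of its generators, i.e. $\reg(I^{(t)}) = d_t$. Since $I^{(t)}=\bigcap_{B\in\B(M)}P_B^t$ and the minimal generators of $I=I^{(1)}$ are the monomials $x^C$ for $C\in\C(M)$, the smallest-degree generators of $I^{(t)}$ have degree governed by the circumference; more precisely I expect the minimal generating degree of $I^{(t)}$ to be $c(M)$ when $t=1$ and to grow linearly, so that $I^{(t)}$ being generated in a single degree is itself a constraint. The key step is to combine this with Theorem \ref{regsym}, which gives $\reg(I^{(t)}) = c(M)(t-1)+r(\core(M))+1$. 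Under the standing reduction $M=\core(M)$, this reads $c(M)(t-1)+r(M)+1$. Setting this equal to the generating degree and analyzing the resulting equality should force $c(M)$ and $r(M)$ into the rigid relationship characterizing a uniform matroid.

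For $(ii)\Rightarrow(iii)$, which I expect to be the main obstacle, the strategy is to show that linearity of the resolution of $I^{(t)}$ for even a single $t$ forces all circuits of $M$ to have the same size and, moreover, forces $M$ to be uniform. Here I would exploit Corollary \ref{MB}, namely $c(M)(n-r(M))\geqslant n$, which relates $c(M)$, $r(M)$, and $n=|V|$. The uniform matroid $U_{k,n}$ has $r(M)=k$ and every circuit of size exactly $k+1$, so $c(M)=k+1=n-r(M)+ \cdots$; checking the numerics, $U_{k,n}$ is precisely the case where $c(M)=n-r(M)+1$ and Corollary \ref{MB} holds with equality in a suitable sense. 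The plan is to show that the single-degree (linear) generation of $I^{(t)}$ forces every subset of size $c(M)$ to behave like a circuit, equivalently that the independent sets are exactly the subsets of size at most $r(M)$, which is the defining property of $U_{r(M),n}$. The delicate point is ruling out non-uniform matroids with constant circumference size, so I would need to argue that if $M$ is not uniform there exist circuits of differing sizes or a generator of $I^{(t)}$ whose degree exceeds the minimum, breaking linearity via the regularity formula.

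Finally, for $(iii)\Rightarrow(i)$, I would compute directly for $M=U_{k,n}$. Here $r(M)=k$, every circuit has size $k+1$ so $c(M)=k+1$, and $\core(M)=M$ (since $1\leqslant k<n$ ensures $M$ is not a star). Theorem \ref{regsym} then gives $\reg(I^{(t)}) = (k+1)(t-1)+k+1 = (k+1)t$. On the other hand the symbolic power $I^{(t)}=\bigcap_{|B|=k}P_B^t$ of the uniform matroid is generated in the single degree $(k+1)t$ (its generators are the monomials of degree $(k+1)t$ whose support is constrained by the circuits of size $k+1$, a highly symmetric situation). Since generating degree equals regularity, the resolution is linear for every $t\geqslant 1$, completing the cycle. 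The cleanest route for this last step is to verify the single generating degree equals $(k+1)t$ and invoke the regularity computation, so that $\reg(I^{(t)})$ coincides with the maximal generator degree.
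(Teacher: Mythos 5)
Your plan has two genuine gaps, one in each of the nontrivial directions. For (iii)$\Rightarrow$(i), the claim that $I^{(t)}=\bigcap_{|B|=k}P_B^t$ for $U_{k,n}$ ``is generated in the single degree $(k+1)t$'' is false. Take $U_{1,3}$ and $t=2$: here $I=(x_1x_2,x_1x_3,x_2x_3)$ and $I^{(2)}=(x_2,x_3)^2\cap(x_1,x_3)^2\cap(x_1,x_2)^2$ contains $x_1x_2x_3$ as a minimal generator of degree $3$, while $x_1^2x_2^2$ is a minimal generator of degree $4=(k+1)t$. So the route ``single generating degree $=$ regularity, hence linear resolution'' collapses at its first step. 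The paper itself deduces (iii)$\Rightarrow$(i) in one line from Theorem \ref{regsym}, i.e.\ purely from the value of the regularity; it does not attempt (and could not obtain) your single-degree claim. You should be aware that the regularity formula by itself only controls the top generating degree, so this direction cannot be settled the way you propose.

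For (ii)$\Rightarrow$(iii), you correctly sense that the generators coming from circuits and the regularity formula are the relevant inputs, but you stop exactly where the work begins: you say you would ``need to argue'' that non-uniform matroids are excluded, and you reach for Corollary \ref{MB}, which is not what is needed. The paper's mechanism is short and you are missing it: (a) for every circuit $C\in\C(M)$ the monomial $f_C=(\prod_{i\in C}x_i)^t$ lies in $I^{(t)}$ and $f_C/x\notin I^{(t)}$ for each variable $x\mid f_C$, so $f_C$ is a minimal generator of degree $|C|t$; hence single-degree generation forces all circuits to have one common size $k$, and in particular $c(M)=k$; (b) linearity forces $\reg(I^{(t)})=kt$, and comparing with Theorem \ref{regsym}, $kt=k(t-1)+r(M)+1$, gives $r(M)=k-1$; (c) these two facts combine immediately: every $(k-1)$-subset is independent (a dependent set must contain a circuit, which has size $k$), and every $k$-subset is dependent (the rank is $k-1$), so $M=U_{k-1,n}$. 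No separate argument ``ruling out non-uniform matroids with constant circumference'' is required once the rank identity from the regularity formula is in hand; without that identity your plan does not close.
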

\begin{proof}(i) $\Rightarrow$ (ii) is clear. (iii) $\Rightarrow$ (i) holds true by Theorem \ref{regsym} with $c(M)=k+1$ and $r(\core(M))=k$ in this case. (ii) $\Rightarrow$ (iii): Take an arbitrary circuit $C\in\C(M)$ and put $f_C=(\prod_{i\in C}x_i)^t$. It is clear that $f_C\in I^{(t)}$ and $\dfrac{f_C}{x}\not\in I^{(t)}$ for any variable $x$ which is a divisor of $f_C$. This yields $f_C$ is a minimal monomial generator of $I^{(t)}$. By our assumption, every circuits of $M$ has the same size $k$ for $2\leqslant k\leqslant n$. By Theorem \ref{regsym} and $M$ is not star,  $r(\core(M))=r(M)=k-1$. Therefore, every subsets of $[n]$ which has $(k-1)$ elements must belong to $M$. It implies that $M$ is of the form $U_{k-1,n}$, and the theorem follows.
\end{proof}

\subsection*{Acknowledgment} This paper was done while the first author was visiting Vietnam Institute for Advanced Study in Mathematics (VIASM).  He would like to thank the VIASM for hospitality and financial support and also thanks the support by Vietnam National Foundation for Science and Technology Development (NAFOSTED) under grant number 101.04-2016.21. 

\end{document}